\DeclareFontFamily{U}{mathb}{\hyphenchar\font45}
\DeclareFontShape{U}{mathb}{m}{n}{
      <5> <6> <7> <8> <9> <10> gen * mathb
      <10.95> mathb10 <12> <14.4> <17.28> <20.74> <24.88> mathb12
      }{}
\DeclareSymbolFont{mathb}{U}{mathb}{m}{n}
\DeclareMathSymbol{\monus}{2}{mathb}{"01}
\newtheorem{Theorem}{Theorem}[section]
\newtheorem{Lemma}[Theorem]{Lemma}
\newtheorem{Corollary}[Theorem]{Corollary}
\newtheorem{ClaimH}{Claim}
\newtheorem{ClaimT}{Claim}
\newtheorem*{GrowthTheorem}{Growth theorem}
\newtheorem*{GeneralizedGrowthTheorem}{Generalized growth theorem}
\theoremstyle{definition}
\newtheorem{Definition}[Theorem]{Definition}
\newtheorem{Question}[Theorem]{Question}
\DeclareMathOperator{\tot}{\mathrm{tot}}
\DeclareMathOperator{\true}{\mathrm{true}}
\newcommand{\andd}{\wedge}
\newcommand{\orr}{\vee}
\newcommand{\la}{\langle}
\newcommand{\ra}{\rangle}
\newcommand{\lc}{\ulcorner}
\newcommand{\rc}{\urcorner}
\newcommand{\da}{{\downarrow}}
\newcommand{\ua}{{\uparrow}}
\newcommand{\imp}{\rightarrow}
\newcommand{\biimp}{\leftrightarrow}
\newcommand{\join}{\cup}
\newcommand{\meet}{\cap}
\newcommand{\pa}{\mathrm{PA}}
\newcommand{\bso}{\mathrm{B}\Sigma_1}
\newcommand{\iso}{\mathrm{I}\Sigma_1}
\newcommand{\leqE}{\leq_\mathrm{E}}
\newcommand{\geqE}{\geq_\mathrm{E}}
\newcommand{\ngeqE}{\ngeq_\mathrm{E}}
\newcommand{\equivE}{\equiv_\mathrm{E}}
\newcommand{\leE}{<_\mathrm{E}}
\newcommand{\geE}{>_\mathrm{E}}
\newcommand{\llE}{\ll_\mathrm{E}}
\newcommand{\ggE}{\gg_\mathrm{E}}
\DeclareMathOperator{\degree}{\mathrm{deg}}
\DeclareMathOperator{\degE}{\degree_\mathrm{E}}
\newcommand{\dg}[1]{\mathbf{#1}}
\newcommand{\leqaE}[1]{\leq_{#1\mathrm{E}}}
\newcommand{\geqaE}[1]{\geq_{#1\mathrm{E}}}
\newcommand{\ngeqaE}[1]{\ngeq_{#1\mathrm{E}}}
\newcommand{\equivaE}[1]{\equiv_{#1\mathrm{E}}}
\newcommand{\geaE}[1]{>_{#1\mathrm{E}}}
\newcommand{\llaE}[1]{\ll_{#1\mathrm{E}}}
\newcommand{\ggaE}[1]{\gg_{#1\mathrm{E}}}
\newcommand{\degaE}[1]{\degree_{#1\mathrm{E}}}
\newcommand{\leqP}[1]{\leq_{#1}}
\newcommand{\geqP}[1]{\geq_{#1}}
\newcommand{\equivP}[1]{\equiv_{#1}}
\newcommand{\leP}[1]{<_{#1}}
\newcommand{\geP}[1]{>_{#1}}
\newcommand{\degP}[1]{\degree_{#1}}
\newcommand{\run}{\mathtt{runA}}
\newcommand{\btrue}{\mathtt{true}}
\newcommand{\bfalse}{\mathtt{false}}
\newcommand{\slim}{\mathrm{SLim}}
\newcommand{\Hd}{\mathcal{H}}
\newcommand{\Had}[1]{\mathcal{H}_{#1}}
\newcommand{\Pd}[1]{\mathcal{P}_{#1}}
\title[Degrees without the cupping property]{Honest elementary degrees and degrees of relative provability without the cupping property}
\author{Paul Shafer}
\address{Department of Mathematics\\
Ghent University\\
Krijgslaan 281 S22\\
B-9000 Ghent\\
Belgium}
\email{paul.shafer@ugent.be}
\urladdr{http://cage.ugent.be/~pshafer/}
\thanks{Paul Shafer is an FWO Pegasus Long Postdoctoral Fellow.}
\date{\today}
\begin{document}

\begin{abstract}
An element $a$ of a lattice \emph{cups} to an element $b > a$ if there is a $c < b$ such that $a \join c = b$.  An element of a lattice has the \emph{cupping property} if it cups to every element above it.  We prove that there are non-zero honest elementary degrees that do not have the cupping property, which answers a question of Kristiansen, Schlage-Puchta, and Weiermann~\cite{KristiansenSchlage-PuchtaWeiermann}.  In fact, we show that if $\dg b$ is a sufficiently large honest elementary degree, then there is an $\dg a$ with $\dg 0 \leE \dg a \leE \dg b$ that does not cup to $\dg b$.  For comparison, we modify a result of Cai~\cite{CaiHigherUnprovability} to show that, in several versions of the related degrees of relative provability, the preceding property holds for all non-zero $\dg b$, not just sufficiently large $\dg b$.
\end{abstract}

\maketitle

\section{Introduction}

An element $a$ of a lattice \emph{cups} to an element $b > a$ if there is a $c < b$ such that $a \join c = b$.  An element $a$ of a lattice has the \emph{cupping property} if it cups to every $b > a$.  In this work, we study the cupping property in several related lattices arising from elementary functions and total algorithms.

The first lattice we consider is the the lattice $\Hd$ of \emph{honest elementary degrees}, which arose from attempts to classify various sub-recursive classes of functions into hierarchies.  In $\Hd$, the objects are (equivalence classes of) functions whose graphs are elementary relations, and these functions are compared via the `elementary in' relation.  The basic theory of this structure was developed by Meyer and Ritchie~\cite{MeyerRitchie} and by Machtey~\cites{MachteyLoop, MachteyLattice, MachteyDensity}.  In recent years, intense work mainly by Kristiansen~\cites{KLSWstructure, KristiansenAlgorithmsVsFunctions, KristiansenDegreesFragments, KristiansenInfo, KristiansenJump, KristiansenLoHi, KristiansenSchlage-PuchtaWeiermann} has significantly advanced the theory.  We refer the reader to~\cite{KLSWstructure} (and to the related~\cite{KristiansenSchlage-PuchtaWeiermann}) for a survey of the area.  In~\cite{KristiansenSchlage-PuchtaWeiermann}, the authors ask if every $\dg a \in \Hd$ with $\dg a \geE \dg 0$ has the cupping property.  We answer this question negatively by showing that if $\dg b \in \Hd$ is sufficiently large (in the sense of Definition~\ref{def-dominate}), then there is an $\dg a \in \Hd$ with $\dg 0 \leE \dg a \leE \dg b$ that does not cup to $\dg b$ (Corollary~\ref{cor-NoCuppingH}).

Next we consider two related families of lattices:  the \emph{degrees of provability} relative to arithmetical theories extending $\iso$ and the \emph{honest $\alpha$-elementary degrees} for ordinals $\alpha \leq \epsilon_0$ of the form $\omega^\beta$.  Let $T$ be a consistent first-order theory in the language of arithmetic.  In $\Pd{T}$, the degrees of provability relative to $T$, the objects are (equivalence classes of) total algorithms (i.e., indices of total Turing machines), and these algorithms are compared via the `provably total' relation.  That is, $\degP{T}(\Phi) \geqP{T} \degP{T}(\Psi)$ if $T \vdash \tot(\Phi) \imp \tot(\Psi)$, where $\tot(\Phi)$ is the sentence expressing the totality of the Turing machine $\Phi$.  Cai~\cite{CaiDegrees} introduced the degrees of relative provability in order to analyze the provability strengths of true $\Pi_2$ sentences or, equivalently, sentences expressing the totality of total algorithms.  This line of research continues impressively in~\cites{ACDLM, CaiProvingUnprovability, CaiHigherUnprovability}.

In $\Hd_\alpha$, the honest $\alpha$-elementary degrees, the objects are again (equivalence classes of) functions whose graphs are elementary, and these functions are compared via the `$\alpha$-elementary in' relation, which coarsens the `elementary in' relation by allowing functions to be iterated $\beta < \alpha$ many times.  Kristiansen, Schlage-Puchta, and Weiermann~\cite{KristiansenSchlage-PuchtaWeiermann} introduced the honest $\alpha$-elementary degrees in order to connect sub-recursive hierarchies to provability in Peano arithmetic ($\pa$).

The degrees of relative provability and the honest $\alpha$-elementary degrees are very closely related.  For a theory $T$, let $T^+$ be the extension of $T$ by all true $\Pi_1$ sentences.  Kristiansen~\cite{KristiansenAlgorithmsVsFunctions} proves that $\Pd{\pa^+}$ and $\Hd_{\epsilon_0}$ are isomorphic, and analogous results should hold for various fragments of $\pa$ and the appropriate ordinals.

Cai~\cite{CaiHigherUnprovability} proves that there are non-zero elements of $\Pd{\pa^+}$ that do not have the cupping property.  It follows from Kristiansen's isomorphism that there are also non-zero elements of $\Hd_{\epsilon_0}$ that do not have the cupping property.  We modify Cai's result to prove that if $T$ is a consistent, recursively axiomatizable theory extending $\iso$, then for every non-zero $\dg b \in \Pd{T^+}$ there is an $\dg a \in \Pd{T^+}$ with $\dg 0 \leP{T^+} \dg a \leP{T^+} \dg b$ that does not cup to $\dg b$ (Corollary~\ref{cor-NoCuppingT}).  Consider then the following two statements:
\begin{itemize}
\item[($\star$)] For every $b$ that is sufficiently large (where the definition `sufficiently large' depends on the lattice in question) there is an $a$ with $0 < a < b$ that does not cup to $b$.
\item[($\dagger$)] For every $b > 0$ there is an $a$ with $0 < a < b$ that does not cup to $b$.
\end{itemize}
Corollary~\ref{cor-NoCuppingH} states that ($\star$) holds in $\Hd$.  By modifying the argument, we also see that ($\star$) holds in the $\Hd_\alpha$'s.  Corollary~\ref{cor-NoCuppingT} states that ($\dagger$) holds in $\Pd{T^+}$ for every consistent, recursively axiomatizable theory $T$ extending $\iso$.  In particular, ($\star$) holds in $\Pd{\pa^+}$ and so, by Kristiansen's isomorphism, also in $\Hd_{\epsilon_0}$.  Thus the natural question is whether or not ($\dagger$) holds in $\Hd$ and in every $\Hd_\alpha$.  We expect that ($\dagger$) holds in many of the $\Hd_\alpha$'s by extending Kristiansen's isomorphism result to fragments of $\pa$.

\section{Honest elementary degrees}

In this section, we provide a basic introduction to the theory of the honest elementary degrees.  Again, we refer the reader to \cites{KLSWstructure, KristiansenSchlage-PuchtaWeiermann} for more comprehensive surveys.

\begin{Definition}\label{def-elementary}{\ }
\begin{itemize}
\item The \emph{elementary functions} are those functions $f \colon \omega^n \imp \omega$ that can be generated from the \emph{initial elementary functions} by the \emph{elementary definition schemes}.

\item The \emph{initial elementary functions} are
\begin{itemize}
\item the projection functions $\ell^k_i$ for all $k > 0$ and $i < k$, where $\ell^k_i(x_0, \dots, x_i, \dots, x_{k-1}) = x_i$;
\item the $0$-ary constants $0$ and $1$; addition ($+$); and truncated subtraction (i.e., monus $\monus$).
\end{itemize}

\item The \emph{elementary definition schemes} are
\begin{itemize}
\item composition:  $f(\vec x) = h(g_0(\vec x), g_1(\vec x), \dots, g_{m-1}(\vec x))$;
\item bounded sum:  $f(\vec x, y) = \sum_{i < y}g(\vec x, i)$; and
\item bounded product:  $f(\vec x, y) = \prod_{i < y}g(\vec x, i)$.
\end{itemize}

\item A relation is \emph{elementary} if its characteristic function is elementary.  

\item A function $f$ has \emph{elementary graph} if the relation $R(\vec x, y) \coloneqq (f(\vec x) = y)$ is elementary.

\item A function $f \colon \omega^n \imp \omega$ is \emph{elementary in} a function $g \colon \omega^k \imp \omega$ ($f \leqE g$) if $f$ can be generated from $g$ and the initial elementary functions by the elementary definition schemes.

\item Functions $f$ and $g$ are \emph{equivalent} ($f \equivE g$) if $f \leqE g$ and $g \leqE f$.
\end{itemize}
\end{Definition}

The elementary functions have nice closure properties, such as closure under bounded search and closure under bounded primitive recursion.  These closure properties lead to useful alternative characterizations.  To wit, the elementary functions are exactly the functions in $\mathcal{E}^3$, which denotes level $3$ of the Grzegorczyk hierarchy.  That is, the elementary functions are the closure of $0$, the successor function, the projection functions, the exponential function $2^x$, and the $\max$ function under composition and bounded primitive recursion.  One can also take advantage of the fact that Kleene's $\mathcal{T}$ predicate is elementary to show that the elementary functions are exactly those functions that can be computed by Turing machines that run in elementary time.  That is, $f$ is elementary if and only if there is a Turing machine computing $f$ that runs in time $O(2_k^n)$ for some $k$, where $2_k$ is the $k$\textsuperscript{th} iterate of the exponential function (so $2_2^n = 2^{2^n}$, $2_3^n = 2^{2^{2^n}}$, and so forth).  See~\cite[Chapter~1]{Rose} for a presentation of the above-mentioned facts.

We study the class of all functions with elementary graphs, quasi-ordered by $\leqE$.  By the discussion in~\cite[Section~1]{KristiansenDegreesFragments}, it suffices to consider the so-called \emph{honest} functions, as for every function $f$ with elementary graph, there is an honest function $g$ with $g \equivE f$.

\begin{Definition}
A function $f \colon \omega^n \imp \omega$ is \emph{honest} if
\begin{itemize}
\item $f$ is unary:  $n=1$;

\item $f$ dominates $2^x$:  $\forall x (f(x) \geq 2^x)$;

\item $f$ is monotone:  $\forall x(f(x) \leq f(x+1))$; and

\item $f$ has elementary graph.
\end{itemize}
\end{Definition}

The idea behind the terminology is that the output of an honest function gives some indication of how long the computation took.  If $f$ is honest, then there is a Turing machine computing $f$ whose runtime is elementary in $f$.  What would be considered \emph{dishonest} is a Turing machine that makes long computations to produce short outputs (see, for example, \cite{AdamsHH}).

We can now define the honest elementary degrees.

\begin{Definition}\label{def-HEDeg}{\ }
\begin{itemize}
\item The \emph{honest elementary degree} of an honest function $f$ is
\begin{align*}
\degE(f) = \{g : \text{$g$ is honest and $g \equivE f$}\}
\end{align*}

\item The set of \emph{honest elementary degrees} is $\Hd = \{\degE(f) : \text{$f$ is honest}\}$.
\end{itemize}
\end{Definition}

The $\leqE$ relation induces a partial order on $\Hd$ in the usual way:  for honest functions $f$ and $g$, define $\degE(f) \leqE \degE(g)$ if $f \leqE g$.  The resulting structure is a distributive lattice with join defined by $\degE(f) \join \degE(g) = \degE(\max[f,g])$ and meet defined by $\degE(f) \meet \degE(g) = \degE(\min[f,g])$, and this lattice has a minimum element $\dg 0 = \degE(2^x)$ (see~\cite{KLSWstructure}).  Here $\max[f,g]$ is the function defined by $\max[f,g](x) = \max(f(x), g(x))$, and the function $\min[f,g]$ is defined analogously.

For a function $f \colon \omega \imp \omega$ and a $k \in \omega$, let $f^k$ denote the $k$\textsuperscript{th} iterate of $f$, defined by $f^0(x) = x$ and $f^{k+1}(x) = f(f^k(x))$.  For functions $f, g \colon \omega \imp \omega$, write $f \leq g$ if $g$ dominates $f$:  $\forall x(f(x) \leq g(x))$.  Kristiansen's growth theorem (\cite{KristiansenInfo}; see \cite[Theorem~2.3]{KLSWstructure}) characterizing the $\leqE$ relation on honest functions in terms of rates of growth is the key tool for working with the honest elementary degrees.

\begin{GrowthTheorem}[\cite{KristiansenInfo}]
If $f$ and $g$ are honest functions, then $f \leqE g$ if and only if $f \leq g^k$ for some $k \in \omega$.
\end{GrowthTheorem}

\section{Honest elementary degrees without the cupping property}

Kristiansen's result \cite[Theorem~3.4]{KristiansenDegreesFragments} (see also \cite[Theorem~10]{KristiansenSchlage-PuchtaWeiermann} and \cite[Theorem~5.3]{KLSWstructure}) states that every honest elementary degree that is sufficiently large has the cupping property, where `sufficiently large' is made precise by the following definition.

\begin{Definition}\label{def-dominate}{\ }
\begin{itemize}
\item For functions $f,g \colon \omega \imp \omega$, define $f \llE g$ if some fixed iterate of $g$ eventually dominates every iterate of $f$:  $(\exists k)(\forall m)(\forall^\infty x)(f^m(x) \leq g^k(x))$.

\item For honest elementary degrees $\degE(f)$ and $\degE(g)$, define $\degE(f) \llE \deg(g)$ if $f \llE g$.
\end{itemize}
\end{Definition}

As an honest function is equivalent to its finite iterations, it is easy to see that $\dg a \llE \dg b$ if and only if there is a $g \in \dg b$ that eventually dominates every $f \in \dg a$.  We refer the reader to~\cites{KristiansenSchlage-PuchtaWeiermann, KLSWstructure} for more information concerning the $\llE$ relation, including its original definition in terms of universal functions.  We remark that although $\leE$ is a dense partial ordering of $\Hd$ by work of Machtey~\cite{MachteyDensity} (see also~\cites{KLSWstructure, KristiansenDegreesFragments}), it is not known whether $\llE$ is a dense partial ordering of $\Hd$ (see~\cite[Section~4]{MeyerRitchie}).  The precise statement of Kristiansen's theorem on cupping is the following.

\begin{Theorem}[{\cite[Theorem~3.4]{KristiansenDegreesFragments}}]\label{thm-KristiansenCupping}
If $\dg a$ and $\dg b$ are honest elementary degrees with $\dg 0 \llE \dg a \leE \dg b$, then $\dg a$ cups to $\dg b$. 
\end{Theorem}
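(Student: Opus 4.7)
The plan is to exhibit an honest function $c$ with $c \leqE b$, $c \not\equivE b$, and $\max[a,c] \equivE b$, so that $\degE(c)$ witnesses the cupping of $\dg a$ to $\dg b$. The case $\dg a = \dg b$ is trivial: take $\dg c = \dg 0$, which satisfies $\dg 0 \leE \dg b$ since $\dg 0 \llE \dg b$ forces $\dg 0 \neq \dg b$ (by irreflexivity of $\llE$ for honest functions), and $\dg a \join \dg 0 = \dg b$. So I may assume $\dg a \leE \dg b$ strictly, which by the growth theorem means $b \not\leq a^k$ for any $k$.

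My construction uses a pseudo-inverse of an iterate of $a$. After replacing $a$ by $a + \mathrm{id}$ to make $a$ strictly monotone without changing $\dg a$, I would fix an integer $q$ (to be chosen in terms of $b$) and set $\phi(y) = \max\{x : a^q(x) \leq y\}$, which is elementary in $a$ and satisfies $\phi(a^q(x)) = x$. Then define
\begin{equation*}
c(y) = \max\bigl(2^y,\, b(\phi(y))\bigr).
\end{equation*}
By construction $c$ is honest and $c \leqE b$. Since $c(a^q(x)) \geq b(\phi(a^q(x))) = b(x)$, we get $\max[a,c]^{q+1}(x) \geq c(a^q(x)) \geq b(x)$, yielding $b \leqE \max[a,c]$ via the growth theorem; combined with the immediate $\max[a,c] \leqE b$, this gives $\max[a,c] \equivE b$.

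The main obstacle is verifying that $c \not\equivE b$, equivalently (by the growth theorem) that $c^r \not\geq b$ for every $r$. Heuristically, the hypothesis $\dg 0 \llE \dg a$ forces $a^q$ to dominate $2^{mx}$ for every $m$, which makes the pseudo-inverse $\phi$ extraordinarily slow-growing; iterating $c$ repeatedly feeds its output through $\phi$, keeping the argument of $b$ small and preventing $c^r(y)$ from ever catching up to $b(y)$. Turning this heuristic into a rigorous bound on $c^r$, and then using the strict hypothesis $b \not\leq a^k$ to close the argument, is the technical heart of the proof. The choice of $q$ must balance two conflicting demands: $q$ should be large enough that $\phi$'s slowness tames the growth of $c^r$, yet the join identity above must still hold for that $q$. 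The existence of such a $q$ is precisely what the gap $\dg 0 \llE \dg a$ provides; without it, the construction collapses, consistent with the failures of cupping established elsewhere in the present paper.
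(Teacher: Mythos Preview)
The paper does not contain a proof of this theorem: it is quoted from Kristiansen~\cite{KristiansenDegreesFragments} as background for the cupping discussion, so there is no ``paper's own proof'' to compare against.  Your construction---defining $c(y)=\max(2^y,\,b(\phi(y)))$ with $\phi$ a pseudo-inverse of $a^q$---is essentially the standard one and the verifications that $c$ is honest, that $c\leqE b$, and that $\max[a,c]\equivE b$ are correct as written.

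There is, however, a genuine gap: you have not proved $b\nleqE c$, and your own write-up concedes this (``Turning this heuristic into a rigorous bound on $c^r$ \dots\ is the technical heart of the proof'').  A heuristic is not a proof, and this step is exactly where the hypothesis $\dg 0\llE\dg a$ does its work.  Moreover, your diagnosis of the difficulty is off: you say ``the choice of $q$ must balance two conflicting demands,'' but the join identity $\max[a,c]\equivE b$ that you established holds for \emph{every} $q\geq 1$---your own computation $\max[a,c]^{q+1}(x)\geq c(a^q(x))\geq b(x)$ is valid regardless of $q$.  There is no upper constraint on $q$ to balance against; the only constraint is that $q$ be large enough.

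Concretely, the hypothesis $\dg 0\llE\dg a$ provides a single $q$ such that $a^q$ eventually dominates \emph{every} elementary function (equivalently, every $2_m$).  With this $q$ fixed, one shows by an inductive estimate that for each $r$ the iterate $c^r$ is bounded by a function of the shape $b^{s}\circ(\text{elementary})$ with $s$ independent of the argument, or more directly that $c^r(y)\leq b^{r}(\phi(y)+r)$ on a tail; combining such a bound with the strict hypothesis $b\nleq a^k$ for all $k$ then yields points where $c^r(y)<b(y)$.  Until you supply an argument of this kind, the proof is incomplete.
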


Thus if $\dg a$ is an honest elementary degree with $\dg a \ggE \dg 0$, then $\dg a$ has the cupping property.  On the other hand, $\dg 0$, being the minimum degree, certainly does not have the cupping property.  Kristiansen, Schlage-Puchta, and Weiermann~\cite{KristiansenSchlage-PuchtaWeiermann} (and again Kristiansen, Lubarsky, Schlage-Puchta, and Weiermann~\cite{KLSWstructure}) therefore ask if Theorem~\ref{thm-KristiansenCupping} can be improved to all $\dg a \geE \dg 0$.  We prove that this is not the case.

Our technical theorem says that if $\dg b \ggE \dg 0$, then there is a $\dg a \geE \dg 0$ that can only cup to degrees $\geqE \dg b$ via degrees that are already $\geqE \dg b$.  Once we have this theorem, it is easy to produce a non-zero $\dg a \leE \dg b$ that does not cup to $\dg b$ by appealing to the distributive lattice structure of $\Hd$.

Let $g \in \dg b$.  To prove the theorem, we need to produce an honest $f \geE 2^x$ such that for every honest $h$, if $\max[f, h] \geqE g$ then $h \geqE f$.  Over the course of its computation, $f$ keeps track of a set $C$ of (indices of) functions $h$ that look like they might satisfy $\max[f, h]^e \geq g$ for some $e$.  Here $\max[f, h]^e$ is the $e$\textsuperscript{th} iterate of the function $\max[f, h]$.  For each $h \in C$, $f$ tries to stay below $h$ so that if $\max[f,h]^e$ really is $\geq g$, then $h$ will eventually dominate $f$.  By removing $h$ from $C$ when learning that $\max[f,h]^e \ngeq g$, $f$ can find safe numbers $x$ for which $f(x)$ can be large in order to ensure that $f \geE 2^x$.

\begin{Theorem}\label{thm-NoCupHelpH}
For every $\dg b \in \Hd$ with $\dg b \ggE \dg 0$, there is an $\dg a \in \Hd$ with $\dg a \geE \dg 0$ such that $(\forall \dg c \in \Hd)[(\dg a \join \dg c \geqE \dg b) \imp (\dg c \geqE \dg b)]$.
\end{Theorem}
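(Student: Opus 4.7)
Fix an honest $g \in \dg b$; since $\dg 0 \llE \dg b$ we may assume $g$ eventually dominates every iterate of $2^x$. The plan is to construct an honest $f$ such that (i) $f \not\leqE 2^x$, and (ii) for every honest $h$, $\max[f,h] \geqE g$ implies $h \geqE f$. These two suffice, for if $\dg a = \degE(f)$, $\dg c = \degE(h)$, and $\dg a \join \dg c \geqE \dg b$, then $\max[f,h] \geqE g$, so (ii) yields $\dg c \geqE \dg a$, whence $\dg a \join \dg c = \dg c \geqE \dg b$. By the Growth theorem, (ii) is equivalent to: for every pair $(i,e) \in \omega^2$, if $\max[f,h_i]^e \geq g$ pointwise then $f \leq h_i^k$ pointwise for some $k$.

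The construction proceeds in stages, maintaining a set $C_s$ of pairs $(i,e)$ that are still \emph{alive} at stage $s$, meaning the pair has been introduced, no $x \leq s$ has yet been found witnessing $\max[f,h_i]^e(x) < g(x)$ (computed using only already-defined values of $f$, with $h_i$ simulated in at most $s$ steps), and $h_i$ has passed an elementary honesty check on inputs $\leq s$. At stage $s$: try to adjoin the pair $\langle i,e\rangle = s$, deferring its introduction if doing so would be inconsistent with what follows; remove from $C_{s-1}$ every pair that has now been refuted or that has failed the honesty check; finally set $f(s)$ to be the largest value that is both $\geq \max(2^s, f(s-1))$ and $\leq h_i(s)$ for every $(i,e) \in C_s$. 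Property (ii) follows immediately: if $\max[f,h_i]^e \geq g$ pointwise for honest $h = h_i$, then no refutation witness ever appears, so $(i,e)$ remains in $C_s$ forever past its introduction, giving $f(s) \leq h(s)$ on a tail and hence $f \leqE h$ by shifting through the Growth theorem.

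The main obstacle is (i). Its feasibility relies on the observation that every honest $h_i$ with $h_i \leqE 2^x$ must have each of its pairs $(i,e)$ eventually refuted: on the inputs where $f$ is bounded by an iterate of $2^x$, the iterate $\max[f,h_i]^e$ is similarly bounded, and $g$ eventually dominates every iterate of $2^x$. This creates a chicken-and-egg tension, because $f$ must be small enough on enough inputs to refute slow $h_i$'s, yet (i) demands $f$ eventually break past every iterate of $2^x$. The resolution is to schedule sparse ``attack'' stages $s_0 < s_1 < \cdots$ with $f(s_k) \geq 2_k^{s_k}$, spaced widely enough that between consecutive attacks the honesty floor $2^s$ catches up to the previous attack value and creates a long quiet interval; during each such interval the introduction of new requirements is deferred (so that a slow $h_i(s)$ cannot undercut the monotonicity demand $f(s) \geq f(s-1) = f(s_k)$), and every slow pair already in $C$ has time to be refuted, so that at stage $s_{k+1}$ no active $(i,e)$ constrains $f$ below $2_{k+1}^{s_{k+1}}$. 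Verifying the existence and consistency of such attack stages, together with the careful handling of deferred introductions so that every honest $h_i$ is eventually witnessed by some pair in $C$, is the main technical challenge.
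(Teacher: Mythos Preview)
Your high-level strategy matches the paper's: maintain a finite set $C$ of live candidates, keep $f$ below every live $h_i$, remove refuted candidates, and occasionally push $f$ past $2_k^x$. But two implementation choices leave genuine gaps that the paper's construction handles differently.

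\emph{Honesty of $f$.} Setting $f(s) \leq h_i(s)$ for every live $(i,e)$ requires running each live $h_i$ to convergence on input $s$. If one live $h_i(s)$ is enormous while another $h_j(s)$ is modest, computing $f(s)$ costs time on the order of $h_i(s)$ while the output $f(s) \leq h_j(s)$ is small; the graph of $f$ is then not decidable in time elementary in the value of $f$, so $f$ is not honest. The paper avoids this by working only with honest associates $\widehat{\Phi}_e$ (whose runtime is $O(\text{output})$ with a constant independent of $e$) and by dovetailing: it runs all live $\widehat{\Phi}_e(m)$ in parallel under a time cap $2_k^m$, stops the instant \emph{one} halts with output $N$, and sets $\Psi(m) = \max\{\Psi(m{-}1), N, 2^m\}$. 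Because runtimes and outputs are uniformly comparable, the first to halt has (up to the constant) the smallest output, so if some $e$ is permanent in $C$ then $\Psi = O(\max[\widehat{\Phi}_e, 2^x])$, and the total runtime of $\Psi$ is polynomial in $\Psi$.

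\emph{Attacks.} Pre-scheduling attack stages $s_k$ and deferring introductions runs straight into the consistency problem you flag but do not resolve: after an attack the monotonicity floor $f(s_k)$ may exceed $h_i(s)$ for pairs already live, and your refutation heuristic (``on the inputs where $f$ is bounded by an iterate of $2^x$ the iterate $\max[f,h_i]^e$ is similarly bounded'') is not sound, since $\max[f,h_i]^e(\ell)$ depends on values of $f$ at points that may well be attack stages. The paper's attacks are instead \emph{opportunistic}: $k$ is incremented and $\Psi(m)$ is pushed to $2_k^m$ exactly when \emph{no} live $\widehat{\Phi}_e(m)$ halts within $2_k^m$ steps, i.e., when every live candidate already has output $\gtrsim 2_k^m$ at $m$, so the jump is automatically consistent with staying below them. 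That $k$ increases infinitely often is then a short proof by contradiction: if $k$ and $C$ both freeze, $\Psi$ is bounded by a fixed $2_{k_0}^x$ and hence elementary, so some permanent $e \in C$ has $\max[\Psi,\widehat{\Phi}_e]^e$ elementary and eventually beaten by $\Gamma$, forcing its removal --- contradiction. No scheduling, no deferrals, and the global dependence of the iterate on $f$ never causes trouble.
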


\begin{proof}
For notational ease, we intentionally conflate a total Turing machine with the function that it computes.  Let $(\Phi_e : e \in \omega)$ be the usual effective list of all Turing machines.  For each $\Phi_e$, let $\widehat{\Phi}_e$ be the Turing machine that, on input $n$, runs $\Phi_e$ on inputs $0, 1, \dots, n$ and, if all of these computations halt, outputs the maximum of $2^n$ and the total number of steps that the $\Phi_e$ computations took.  The Turing machine $\widehat{\Phi}_e$ is essentially the \emph{honest associate} of $\Phi_e$ as defined in~\cite{KristiansenAlgorithmsVsFunctions}, and we have that
\begin{itemize}
\item if $\widehat{\Phi}_e$ is total, then it is honest; and
\item if $h$ is honest, then there is an $e$ such that $h \equivE \widehat{\Phi}_e$ (see~\cite[Lemma~4]{KristiansenAlgorithmsVsFunctions}).
\end{itemize}
We also think of $\widehat{\Phi}_e(n)$ as being the number of steps in the computation of $\widehat{\Phi}_e(n)$ because the runtime of $\widehat{\Phi}_e$ is $O(\widehat{\Phi}_e)$ (and the constants are independent of $e$).

Let $\Gamma$ be a Turing machine computing a representative of $\dg b$ that, by the assumption $\dg b \ggE \dg 0$, eventually dominates every elementary function.  Define a Turing machine $\Psi$ that behaves as follows on input $n$.

\begin{itemize}
\item Initialize $k \coloneqq 2$, $M \coloneqq 1$, and $C \coloneqq \{0\}$.

\item Main loop:  for each $m \leq n$, run $\widehat{\Phi}_e(m)$ for all $e \in C$ in a dovetailing fashion for at most $2_k^m$ steps each.
\begin{itemize}
	\item If some $\widehat{\Phi}_e(m)$ halts with output $N$:
		\begin{itemize}
			\item For each $e \in C$ and each $\ell < m$:
			\begin{itemize}
				\item Run $\Gamma(\ell)$ and $\max[\Psi, \widehat{\Phi}_e]^e(\ell)$ for $m$ steps each, aborting the computation of $\max[\Psi, \widehat{\Phi}_e]^e(\ell)$ if it produces numbers $\geq m$.  (Observe that the value of $\Psi(\ell)$ is the value of $M$ after iteration $\ell$ of the main loop, so the computation of $\max[\Psi, \widehat{\Phi}_e]^e(\ell)$ can be facilitated by storing the previous values of $M$ in a table.)
				\item If $\Gamma(\ell)$ and $\max[\Psi, \widehat{\Phi}_e]^e(\ell)$ both halt within $m$ steps and $\max[\Psi, \widehat{\Phi}_e]^e(\ell) < \Gamma(\ell)$, then set $C \coloneqq C \setminus \{e\}$.
			\end{itemize}
			\item Set $M \coloneqq \max\{M, N, 2^m\}$.
			\item Stop running the $\widehat{\Phi}_e(m)$'s, and go to the next iteration of the main loop.
		\end{itemize}
	\item Else:
		\begin{itemize}
			\item Set $k \coloneqq k+1$.
			\item Let $i$ be the least number that has never been in $C$, and 				set $C \coloneqq C \cup \{i\}$.
			\item Set $M \coloneqq \max\{M, 2_k^m\}$.
		\end{itemize}
\end{itemize}
\item Output $M$ when the main loop terminates.
\end{itemize}

\begin{ClaimH}\label{claim-honest}
$\Psi$ is honest.
\end{ClaimH}

\begin{proof}
Clearly $\Psi$ is unary.  For a given input $n$, let $M_m$ denote the value of $M$ after iteration $m \leq n$ of the main loop.  It is easy to see that $M_m \geq 2^m$, that $M_m$ is monotonic in $m$, and that $M_m = \Psi(m)$.  Thus $\Psi$ dominates $2^x$ and is monotonic.  We need to show that $\Psi$ has elementary graph.  Recall from the discussion following Definition~\ref{def-elementary} that the elementary functions are exactly the functions that can be computed in elementary time.  Thus we need to show that the graph of $\Psi$ is computable in elementary time.  As the output of $\Psi$ is always bigger than the corresponding input, it suffices to show that the runtime of $\Psi(n)$ is elementary in the value of $\Psi(n)$.  In fact, we show that the runtime of $\Psi$ is polynomial in its outputs.

Let $C_m$ be the value of $C$ at the beginning of iteration $m$ of the main loop.  Notice that at most one number is added to $C$ during each iteration, so $|C_m| \leq m+1$.  In iteration $m$, either there is an $e \in C_m$ such that $\widehat{\Phi}_e(m)$ halts within $2_k^m$ steps, or there is not.  Consider first the case in which there is an $e_0 \in C_m$ is such that $\widehat{\Phi}_{e_0}(m)$ halts within $2_k^m$ steps with output $N$.  Then $\widehat{\Phi}_{e_0}(m)$ halts within $O(N)$ steps, so each $\widehat{\Phi}_e(m)$ with $e \in C_m$ is run for $O(N)$ steps.  Therefore $O(|C_m|N)$ steps are spent running the $\widehat{\Phi}_e(m)$'s.  Afterward, for each $e \in C$ and each $\ell < m$, $\Gamma(\ell)$ and $\max[\Psi, \widehat{\Phi}_e]^e(\ell)$ are run for at most $m$ steps each and compared.  This takes $O(|C_m|m^2)$ steps.  Thus the total number of steps taken in this case is
\begin{align*}
O(|C_m|N + |C_m|m^2) = O(mN + m^3) = O(M_n^3) = O(\Psi(n)^3),
\end{align*}
where the first equality is because $|C_m| \leq m+1$, and the second equality is because $m \leq 2^m \leq M_m \leq M_n$ and, in this case, $N \leq M_m \leq M_n$.

Now consider the case in which no $\widehat{\Phi}_e(m)$ halts within $2_k^m$ steps.  In this case, $O(|C_m|2_k^m)$ steps are spent running the $\widehat{\Phi}_e(m)$'s.  Thus the total number of steps taken is $O(M_n^2) = O(\Psi(n)^2)$ because, in this case, $|C_m| \leq m+1 \leq 2_k^m \leq M_m \leq M_n$.

Thus iteration $m$ of the main loop takes $O(\Psi(n)^3)$ steps.  So $\Psi$ runs in time $O(\Psi(n)^4)$.
\end{proof}

\begin{ClaimH}\label{claim-NotZeroH}
$\Psi \geE 2^x$.
\end{ClaimH}

\begin{proof}
It suffices to show that $k$ increases infinitely often in the sense that for every $n_0$ there is an $n \geq n_0$ such that, in the execution of $\Psi(n)$, the value of $k$ increases at the end of iteration $n$ of the main loop.  This is because $\Psi(n) \geq 2_k^n$ in this case, and, therefore, if $k$ increases infinitely often, then $\forall k \exists n (\Psi(n) \geq 2_k^n)$.  This implies that $\Psi \geE 2^x$ by the growth theorem.

To show that $k$ increases infinitely often, we show that for every $n_0$ there is an $n \geq n_0$ such either $k$ increases or $|C|$ decreases during iteration $n$ of the main loop.  Suppose for a contradiction that there is an $n_0$ such that $k$ never increases and $|C|$ never decreases after iteration $n_0$.  Then iteration $n$ enters the `if' case of the main loop for all $n \geq n_0$.  This implies that there is a fixed $k_0$ such that, for all $n \geq n_0$, $\min\{\widehat{\Phi}_e(n) : e \in C\} < 2_{k_0}^n$ and $\Psi(n) = \max\{\Psi(n-1), N, 2^n\}$ for some $N < 2_{k_0}^n$.  It follows that $\Psi$ is elementary by the growth theorem.  Furthermore, $C$ never changes after iteration $n_0$ because no numbers are removed from $C$ by assumption, and no numbers are added to $C$ because the main loop never enters the `else' case.  Thus there must be an $e$ in this fixed $C$ such that $\widehat{\Phi}_e \not\ggE 0$ because there must be an $e \in C$ for which $\widehat{\Phi}_e(n) < 2_{k_0}^n$ holds for infinitely many $n$.  For this $e$, $\max[\Psi, \widehat{\Phi}_e]^e \equivE \widehat{\Phi}_e \not\ggE 0$, so, because $\Gamma \ggE 0$, there is an $\ell$ such that $\max[\Psi, \widehat{\Phi}_e]^e(\ell) < \Gamma(\ell)$.  Therefore $e$ is removed from $C$ during iteration $n$ of the main loop once $n$ is large enough so that the computation witnessing that $\max[\Psi, \widehat{\Phi}_e]^e(\ell) < \Gamma(\ell)$ takes at most $n$ steps.  This contradicts that $|C|$ never decreases after iteration $n_0$.

Now it is easy to see that $k$ increases infinitely often.  If not, there is an $n_0$ such that $k$ never increases after iteration $n_0$.  In this case, by the preceding argument, $|C|$ can only decrease, so there is an $n \geq n_0$ such that $C$ is empty at the start of iteration $n$.  In this situation, the main loop enters the `else' case, and $k$ is increased, contradicting that $k$ never increases.
\end{proof}

\begin{ClaimH}\label{claim-avoidH}
For every honest $h$, either $\max[\Psi, h] \ngeqE \Gamma$ or $h \geqE \Psi$.
\end{ClaimH}

\begin{proof}
First, consider an index $e$ of a total $\widehat{\Phi}_e$.  From the proof of the previous claim, $k$ increases infinitely often, which implies that $e$ is eventually added to $C$.  If $e$ is never removed from $C$, then $\Psi$ is $O(\max[\widehat{\Phi}_e, 2^x])$, which implies that $\widehat{\Phi}_e \geqE \Psi$.  On the other hand, if $e$ is eventually removed from $C$, then there is an $\ell$ such that $\max[\Psi, \widehat{\Phi}_e]^e(\ell) < \Gamma(\ell)$.

Now consider an honest $h$ and the infinitely many indices $e$ such that $\widehat{\Phi}_e \equivE h$ and $h \leq \widehat{\Phi}_e$.  If one such $e$ enters $C$ and is never removed, then $h \equivE \widehat{\Phi}_e \geqE \Psi$.  If every such $e$ is eventually removed from $C$ after it enters, then for infinitely many $e$ there is an $\ell$ such that $\max[\Psi, h]^e(\ell) \leq \max[\Psi, \widehat{\Phi}_e]^e(\ell) < \Gamma(\ell)$.  Hence $\max[\Psi, h] \ngeqE \Gamma$ by the growth theorem.
\end{proof}

Let $\dg a = \degE(\Psi)$.  Then $\dg a \in \Hd$ by Claim~\ref{claim-honest}, and $\dg a \geE \dg 0$ by Claim~\ref{claim-NotZeroH}.  If $\dg c \in \Hd$ is such that $\dg a \join \dg c \geqE \dg b$, then $\dg c \geqE \dg a$ by Claim~\ref{claim-avoidH}.  Therefore $\dg c \geqE \dg a \join \dg c \geqE \dg b$ as desired, which completes the proof.
\end{proof}

Theorem~\ref{thm-NoCupHelpH} implies that there is an honest elementary degree $\dg a \geE \dg 0$ that does not have the cupping property.  In fact, for every $\dg b \ggE \dg 0$, there is a non-zero $\dg a \leE \dg b$ that does not have the cupping property as witnessed by $\dg b$.

\begin{Corollary}\label{cor-NoCuppingH}
For every $\dg b \in \Hd$ with $\dg b \ggE \dg 0$, there is an $\dg a \in \Hd$ with $\dg 0 \leE \dg a \leE \dg b$ such that $(\forall \dg c \in \Hd)[(\dg a \join \dg c = \dg b) \imp (\dg c = \dg b)]$.  That is, for every $\dg b \ggE \dg 0$, there is a non-zero $\dg a \leE \dg b$ that does not cup to $\dg b$.
\end{Corollary}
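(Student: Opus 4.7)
The plan is to apply Theorem~\ref{thm-NoCupHelpH} and then, since the degree it produces need not lie below $\dg b$, intersect with $\dg b$ using the distributive lattice structure of $\Hd$. First I would let $\dg{a}^\ast \in \Hd$ be the degree produced by Theorem~\ref{thm-NoCupHelpH}, so that $\dg{a}^\ast \geE \dg 0$ and every $\dg c$ with $\dg{a}^\ast \join \dg c \geqE \dg b$ satisfies $\dg c \geqE \dg b$. I would then set $\dg a = \dg{a}^\ast \meet \dg b$, which gives $\dg a \leqE \dg b$ for free, and I would be left to verify that $\dg a \geE \dg 0$ and that any $\dg c$ with $\dg a \join \dg c = \dg b$ satisfies $\dg c = \dg b$.

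The non-cupping part is a direct distributivity calculation. If $\dg a \join \dg c = \dg b$, then in particular $\dg c \leqE \dg b$, so
\begin{align*}
\dg b = (\dg{a}^\ast \meet \dg b) \join \dg c = (\dg{a}^\ast \join \dg c) \meet (\dg b \join \dg c) = (\dg{a}^\ast \join \dg c) \meet \dg b,
\end{align*}
forcing $\dg{a}^\ast \join \dg c \geqE \dg b$. Theorem~\ref{thm-NoCupHelpH} then yields $\dg c \geqE \dg b$, and combined with $\dg c \leqE \dg b$ I conclude $\dg c = \dg b$.

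The step that needs real thought, and is the main obstacle, is showing $\dg a \geE \dg 0$, i.e., $\min[\Psi,\Gamma] \nleqE 2^x$ for honest representatives $\Psi \in \dg{a}^\ast$ and $\Gamma \in \dg b$. My plan is to use the hypothesis $\dg b \ggE \dg 0$ to choose $\Gamma$ so that it dominates every elementary function: starting with any honest $\Gamma_0 \in \dg b$, some iterate $\Gamma_0^j$ is still honest, still in $\dg b$, and eventually dominates every iterate of $2^x$ (hence every elementary function, by the growth theorem), so I replace $\Gamma_0$ by $\Gamma = \Gamma_0^j$. With this choice, $\Gamma(x) > 2_k^x$ for all sufficiently large $x$ and each $k$. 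Meanwhile, Claim~\ref{claim-NotZeroH} guarantees $\Psi \geE 2^x$, which by the growth theorem says that for each $k$ one has $\Psi(x) > 2_k^x$ for infinitely many $x$. Combining, $\min[\Psi,\Gamma](x) > 2_k^x$ for infinitely many $x$ for each $k$, so $\min[\Psi,\Gamma] \nleqE 2^x$ as required. After Theorem~\ref{thm-NoCupHelpH} is in hand, nothing else in the argument is really delicate.
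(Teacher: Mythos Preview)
Your proposal is correct and follows essentially the same route as the paper: apply Theorem~\ref{thm-NoCupHelpH} to obtain $\dg{a}^\ast$ (the paper calls it $\dg x$), set $\dg a = \dg{a}^\ast \meet \dg b$, and run the identical distributivity calculation. The only difference is that the paper dismisses the verification of $\dg a \geE \dg 0$ with ``one readily checks that $\dg b \ggE \dg 0$ and $\dg x \geE \dg 0$ imply that $\dg a \geE \dg 0$,'' whereas you spell out this check via the growth theorem and a well-chosen representative $\Gamma \in \dg b$; your argument for that step is sound. (The strict inequality $\dg a \leE \dg b$, which you do not mention separately, follows from the non-cupping conclusion: if $\dg a = \dg b$ then $\dg a \join \dg 0 = \dg b$ would force $\dg 0 = \dg b$.)
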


\begin{proof}
Given $\dg b \ggE \dg 0$, by Theorem~\ref{thm-NoCupHelpH}, let $\dg x \geE \dg 0$ be such that $(\forall \dg c \in \Hd)[(\dg x \join \dg c \geqE \dg b) \imp (\dg c \geqE \dg b)]$.  Let $\dg a = \dg x \meet \dg b$.  One readily checks that $\dg b \ggE \dg 0$ and $\dg x \geE \dg 0$ imply that $\dg a \geE \dg 0$.  Consider a $\dg c \in \Hd$ such that $\dg a \join \dg c = \dg b$.  Clearly $\dg c \leqE \dg b$.  On the other hand, using the fact that $\Hd$ is a distributive lattice,
\begin{align*}
\dg b = \dg a \join \dg c = (\dg x \meet \dg b) \join \dg c = (\dg x \join \dg c) \meet (\dg b \join \dg c) = (\dg x \join \dg c) \meet \dg b.
\end{align*}
Thus $\dg x \join \dg c \geqE \dg b$, which implies that $\dg c \geqE \dg b$ by the choice of $\dg x$.  Thus $\dg c = \dg b$.
\end{proof}

One could also prove Corollary~\ref{cor-NoCuppingH} by directly ensuring that $\dg a \leqE \dg b$ in the proof of Theorem~\ref{thm-NoCupHelpH} (by enforcing that $\Gamma$ dominates $\Psi$).

\begin{Question}
In Theorem~\ref{thm-NoCupHelpH} and Corollary~\ref{cor-NoCuppingH}, can $\dg b \ggE \dg 0$ be weakened to $\dg b \geE \dg 0$?
\end{Question}

We have shown that there are non-zero honest elementary degrees that do not have the cupping property.  For the curious readier, we briefly summarize what is known about \emph{capping} in $\Hd$.  An element $a$ of a lattice \emph{caps} to $b < a$ if there is a $c > b$ such that $a \meet c = b$.  If $\dg a, \dg b \in \Hd$ are such that $\dg b \llE \dg a$, then $\dg a$ does not cap to $\dg b$~\cite{KristiansenLoHi}.  However, this result does not provide a characterization of capping in $\Hd$ because there are $\dg a, \dg b \in \Hd$ with $\dg b \leE \dg a$ and $\dg b \not\llE \dg a$ such that $\dg a$ does not cap to $\dg b$~\cite{KLSWstructure}.

\section{Degrees of relative provability and honest $\alpha$-elementary degrees}

In this section, we provide a basic introduction to the theory of the degrees of relative provability and the honest $\alpha$-elementary degrees.  We assume familiarity with Peano arithmetic and its fragments.  The most important fragment for us is $\iso$, which consists of the basic axioms and the induction scheme for $\Sigma_1$ formulas.  $\iso$ is \emph{$\Sigma_1$-complete}, meaning that $\iso$ proves every true $\Sigma_1$ sentence.  $\iso$ also suffices to define the $\Pi_1$ truth predicate `$\true(n)$,' which states that $n$ codes (i.e., is the G\"odel number of) a true $\Pi_1$ sentence (see, for example,~\cite[Section~I.1(d)]{HajekPudlak} for details).  We note that the $\Pi_1$ truth predicate is itself $\Pi_1$.  Throughout this section, every theory is assumed to be in the language of arithmetic, to be consistent, and to extend $\iso$.

First we describe Cai's degrees of relative provability~\cite{CaiDegrees}.

\begin{Definition}
Fix a theory $T$.  For a Turing machine $\Phi$, let $\tot(\Phi)$ be the $\Pi_2$ sentence expressing that $\Phi$ is total.
\begin{itemize}
\item Turing machine $\Phi$ \emph{provably reduces} to Turing machine $\Psi$ ($\Phi \leqP{T} \Psi$) if $T \vdash \tot(\Psi) \imp \tot(\Phi)$.

\item Turing machines $\Phi$ and $\Psi$ are \emph{provably equivalent} ($\Phi \equivP{T} \Psi$) if $\Phi \leqP{T} \Psi$ and $\Psi \leqP{T} \Phi$.

\item The \emph{provability degree} of a Turing machine $\Phi$ is
\begin{align*}
\degP{T}(\Phi) = \{\Psi : \text{$\Psi$ is a Turing machine and $\Psi \equivP{T} \Phi$}\}
\end{align*}

\item The set of \emph{provability degrees} is $\Pd{T} = \{\degP{T}(\Phi) : \text{$\Phi$ is a total Turing machine}\}$.
\end{itemize}
\end{Definition}

We follow the usual convention that a Turing machine halts on input $n$ if and only if it halts on all inputs $m \leq n$.  This is without loss of generality, assuming $\iso$.  For a Turing machine $\Phi$, let $\widehat\Phi$ be the Turing machine that, on input $n$, runs $\Phi(0), \Phi(1), \dots, \Phi(n)$ in succession and halts if and only if $(\forall m \leq n)(\Phi(m)\da)$.  Then $T \vdash \tot(\widehat\Phi) \biimp \tot(\Phi)$.

It is easy to see that $\leqP{T}$ quasi-orders the Turing machines and therefore induces a partial order on $\Pd{T}$.  In fact, $\Pd{T}$ is a distributive lattice.  Let $\Phi$ and $\Psi$ be two total Turing machines.  Then $\degP{T}(\Phi) \join \degP{T}(\Psi) = \degP{T}(\Gamma)$, where, for each $n$, $\Gamma(n)$ runs $\Phi(n)$ and $\Psi(n)$ simultaneously and converges when both $\Phi(n)$ and $\Psi(n)$ converge.  Similarly, $\degP{T}(\Phi) \meet \degP{T}(\Psi) = \degP{T}(\Theta)$, where, for each $n$, $\Theta(n)$ runs $\Phi(n)$ and $\Psi(n)$ simultaneously and converges when either $\Phi(n)$ converges or $\Psi(n)$ converges.  Notice that $T \vdash \tot(\Gamma) \biimp (\tot(\Phi) \andd \tot(\Psi))$ and that $T \vdash \tot(\Theta) \biimp (\tot(\Phi) \orr \tot(\Psi))$.  $\Pd{T}$ also has a minimum element $\dg 0$, which is the degree of any Turing machine that $T$ proves is total, such as the machine that immediately halts and outputs $0$ on every input.  See~\cite{CaiDegrees} for proofs of these facts.  

We remark that if $\varphi$ is a true $\Pi_2$ sentence, then there is a total Turing machine $\Phi$ such that $T \vdash \varphi \biimp \tot(\Phi)$.  Thus one may think of $\Pd{T}$ as the Lindenbaum algebra of $T$ restricted to true $\Pi_2$ sentences.

Now we describe Kristiansen, Schlage-Puchta, and Weiermann's honest $\alpha$-elementary degrees~\cite{KristiansenSchlage-PuchtaWeiermann}.  First, we recall that every ordinal $\alpha < \epsilon_0$ has a Cantor normal form $\omega^{\alpha_0} + \omega^{\alpha_1} + \dots + \omega^{\alpha_{n-1}} + 0$, where the ordinals $\alpha_0 \geq \alpha_1 \geq \dots \geq \alpha_{n-1}$ are themselves in Cantor normal form.  This allows us to define the \emph{norm} of an ordinal $\alpha < \epsilon_0$ by induction on its Cantor normal form.

\begin{Definition}
Let $\alpha < \epsilon_0$.  The \emph{norm} of $\alpha$, $N(\alpha)$, is defined by induction on $\alpha$'s Cantor normal form by $N(0) = 0$, $N(\beta + \gamma) = N(\beta) + N(\gamma)$, and $N(\omega^\beta) = 1 + N(\beta)$.
\end{Definition}

This definition of norm allows us to make sense of iterating a function transfinitely many times.

\begin{Definition}
Let $f \colon \omega \imp \omega$, and let $\alpha < \epsilon_0$.  The $\alpha$\textsuperscript{th} iterate of $f$, $f_\alpha$, is defined inductively by
\begin{align*}
f_0(n) &= f(n) &\\
f_\alpha(n) &= \max\{f_\beta(f_\beta(n)) : (\beta < \alpha) \andd (N(\beta) \leq N(\alpha) + n)\} & \text{for $\alpha > 0$}.
\end{align*}
\end{Definition}

We now define the `$\alpha$-elementary in' relation by adding transfinite iteration to the elementary definition schemes of Definition~\ref{def-elementary}.

\begin{Definition}
Let $\alpha \leq \epsilon_0$.
\begin{itemize}
\item A function $f \colon \omega^n \imp \omega$ is \emph{$\alpha$-elementary in} a function $g \colon \omega^k \imp \omega$ ($f \leqaE{\alpha} g$) if $f$ can be generated from $g$ and the initial elementary functions of Definition~\ref{def-elementary} by the elementary definition schemes of Definition~\ref{def-elementary} and by $\beta$-iteration for all $\beta < \alpha$.

\item Functions $f$ and $g$ are \emph{equivalent} ($f \equivaE{\alpha} g$) if $f \leqaE{\alpha} g$ and $g \leqaE{\alpha} f$.
\end{itemize}
\end{Definition}

We note that if $f$ is honest and $\alpha < \epsilon_0$, then $f_\alpha$ is also honest~\cite{KristiansenSchlage-PuchtaWeiermann}.

Let $\slim = \{\alpha \leq \epsilon_0 : (\exists \beta > 0)(\alpha = \omega^\beta)\}$.  For $\alpha \in \slim$, Kristiansen, Schlage-Puchta, and Weiermann give the following generalization of the growth theorem.

\begin{GeneralizedGrowthTheorem}[\cite{KristiansenSchlage-PuchtaWeiermann}]
If $\alpha \in \slim$ and $f$ and $g$ are honest functions, then $f \leqaE{\alpha} g$ if and only if $f \leq g_\beta$ for some $\beta < \alpha$.
\end{GeneralizedGrowthTheorem}

We define the honest $\alpha$-elementary degrees for $\alpha \in \slim$ analogously to Definition~\ref{def-HEDeg}.

\begin{Definition}
Let $\alpha \in \slim$.
\begin{itemize}
\item The \emph{honest $\alpha$-elementary degree} of an honest function $f$ is
\begin{align*}
\degaE{\alpha}(f) = \{g : \text{$g$ is honest and $g \equivaE{\alpha} f$}\}
\end{align*}

\item The set of \emph{honest $\alpha$-elementary degrees} is $\Had{\alpha} = \{\degaE{\alpha}(f) : \text{$f$ is honest}\}$.
\end{itemize}
\end{Definition}

Again, $\Had{\alpha}$ is a distributive lattice with partial order induced by $\leqaE{\alpha}$, join and meet defined via $\max$ and $\min$ as with $\Hd$, and minimum element $\dg 0 = \degaE{\alpha}(2^x)$~\cite{KristiansenSchlage-PuchtaWeiermann}.  Notice that $\Had{\omega} = \Hd$ because the finite iterates of a function can be defined using the elementary definition schemes.

For a theory $T$, let $T^+$ denote $T$ extended by all true $\Pi_1$ sentences.  The connection between the degrees of relative provability and the honest $\alpha$-elementary degrees is made clear by the following result of Kristiansen.

\begin{Theorem}[\cite{KristiansenAlgorithmsVsFunctions}]\label{thm-iso}
$\Pd{\pa^+}$ and $\Had{\epsilon_0}$ are isomorphic.
\end{Theorem}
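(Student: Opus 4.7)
The plan is to show that the honest-associate map $\Phi \mapsto \widehat{\Phi}$ (as introduced in the proof of Theorem~\ref{thm-NoCupHelpH}) descends to a lattice isomorphism $\degP{\pa^+}(\Phi) \mapsto \degaE{\epsilon_0}(\widehat{\Phi})$ between $\Pd{\pa^+}$ and $\Had{\epsilon_0}$. The existence of the map is clear: if $\Phi$ is total then $\widehat{\Phi}$ is honest, and conversely every honest $h$ is equivalent to some $\widehat{\Phi}$ (as observed in the proof of Theorem~\ref{thm-NoCupHelpH}), which gives surjectivity onto $\Had{\epsilon_0}$.

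The heart of the argument is to establish the order equivalence
\begin{equation*}
\pa^+ \vdash \tot(\Psi) \imp \tot(\Phi) \quad \Longleftrightarrow \quad \widehat{\Phi} \leqaE{\epsilon_0} \widehat{\Psi}.
\end{equation*}
For the direction $(\Leftarrow)$, suppose $\widehat{\Phi} \leqaE{\epsilon_0} \widehat{\Psi}$. By the generalized growth theorem there is some $\beta < \epsilon_0$ with $\widehat{\Phi} \leq \widehat{\Psi}_\beta$ pointwise. The pointwise bound is a true $\Pi_1$ sentence, hence an axiom of $\pa^+$. Since $\pa$ proves transfinite induction up to each fixed $\beta < \epsilon_0$, one verifies by induction along the Cantor normal form of $\beta$ and by unwinding the recursive definition that $\pa \vdash \tot(\widehat{\Psi}) \imp \tot(\widehat{\Psi}_\beta)$. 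Combining with $\tot(\Psi) \biimp \tot(\widehat{\Psi})$ and the $\Pi_1$-axiom, we conclude $\pa^+ \vdash \tot(\Psi) \imp \tot(\Phi)$.

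For $(\Rightarrow)$, suppose $\pa^+ \vdash \tot(\Psi) \imp \tot(\Phi)$. Applying the ordinal analysis of $\pa^+$ (in the style of Gentzen and Kreisel--Wainer), the proof can be cut-reduced and converted into a concrete witness: an ordinal $\beta < \epsilon_0$ such that, on the hypothesis that $\Psi$ is total, a bound on $\widehat{\Phi}$ is computable by iterating $\widehat{\Psi}$ through $\beta$ stages, i.e., $\widehat{\Phi} \leq \widehat{\Psi}_\beta$. The extra true $\Pi_1$ axioms of $\pa^+$ play their usual role of being absorbed as side conditions without increasing the proof-theoretic ordinal. The generalized growth theorem then gives $\widehat{\Phi} \leqaE{\epsilon_0} \widehat{\Psi}$.

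Once the order equivalence is established, the map descends to a well-defined bijection on degrees that both preserves and reflects $\leq$, hence is an order isomorphism. Preservation of $\join$ and $\meet$ is a routine check: the parallel-convergence machine $\Gamma$ from Section~4 that witnesses $\join$ in $\Pd{\pa^+}$ has honest associate equivalent to $\max[\widehat{\Phi}, \widehat{\Psi}]$, and the dovetailing machine $\Theta$ that witnesses $\meet$ corresponds dually to $\min[\widehat{\Phi}, \widehat{\Psi}]$, which is exactly how these operations are defined in $\Had{\epsilon_0}$. The principal obstacle is the direction $(\Rightarrow)$, which requires the full strength of proof-theoretic ordinal analysis to extract from an arbitrary $\pa^+$-proof of a $\Pi_2$ implication an explicit ordinal $\beta < \epsilon_0$ controlling the growth of the witness function; this is precisely what Kristiansen carries out in~\cite{KristiansenAlgorithmsVsFunctions}.
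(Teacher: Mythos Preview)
The paper does not prove Theorem~\ref{thm-iso}; it merely states the result and attributes it to Kristiansen~\cite{KristiansenAlgorithmsVsFunctions}. So there is no in-paper proof to compare your proposal against.

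That said, your outline is a reasonable high-level sketch of Kristiansen's argument: the honest-associate construction is indeed the bridge between the two structures, the generalized growth theorem handles one direction, and the substantive content lies in extracting an ordinal bound $\beta < \epsilon_0$ from an arbitrary $\pa^+$-proof of a $\Pi_2$ implication. You are right to flag this last step as the crux and to defer to~\cite{KristiansenAlgorithmsVsFunctions} for it; however, as written your proposal is closer to an abstract of Kristiansen's paper than a self-contained proof, since the ordinal-analysis step is only named, not carried out. If the intent was to reproduce the paper's treatment, the correct move is simply to cite the result without proof, as the paper does.
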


$\Pd{T^+}$ and $\Had{\alpha}$ should also be isomorphic for various fragments $T$ of $\pa$ and the appropriate ordinals $\alpha$, but the details still need to be checked.  However, we do not know which, if any, of the $\Pd{T^+}$'s are isomorphic and which, if any, of the $\Had{\alpha}$'s are isomorphic.

\begin{Question}
Are the $\Pd{T^+}$'s isomorphic for the fragments $T$ of $\pa$ extending $\iso$?  Are they elementarily equivalent?  Are the $\Had{\alpha}$'s isomorphic for the $\alpha \in \slim$?  Are they elementarily equivalent?
\end{Question}

In the following section, we extend a result of Cai's implying that there are non-zero elements of $\Pd{T^+}$ that do not have the cupping property~\cite{CaiHigherUnprovability}.  Thus, by Theorem~\ref{thm-iso}, there are non-zero elements of $\Had{\epsilon_0}$ that do not have the cupping property.  We can also prove this fact directly by running the proof of Theorem~\ref{thm-NoCupHelpH} in the context of $\Had{\epsilon_0}$.  In fact, the proof of Theorem~\ref{thm-NoCupHelpH} can be modified to show that, for every $\alpha \in \slim$, there are non-zero elements of $\Had{\alpha}$ that do not have the cupping property.

Fix $\alpha \in \slim$, and reinterpret `$\ll$' in the context of $\Had{\alpha}$ by defining $f \llaE{\alpha} g$ to mean that there is a $\gamma < \alpha$ such that $g_\gamma$ eventually dominates $f_\beta$ for every $\beta < \alpha$:  $(\exists \gamma < \alpha)(\forall \beta < \alpha)(\forall^\infty x)(f_\beta(x) \leq g_\gamma(x))$.  In particular, $\dg b \ggaE{\alpha} \dg 0$ means that there is a $g \in \dg b$ that eventually dominates $2_\beta^x$ (the $\beta$\textsuperscript{th} iterate of $2^x$) for every $\beta < \alpha$.

As in the proof of Theorem~\ref{thm-NoCupHelpH}, fix a Turing machine $\Gamma$ computing a member of $\dg b$ that eventually dominates $2_\beta^x$ for every $\beta < \alpha$.  Fix an elementary \emph{fundamental sequence} of ordinals $\alpha_0 < \alpha_1 < \dots$ that converges to $\alpha$.  That is, fix an elementary function that maps $k$ to a code for $\alpha_k$.  Define the Turing machine $\Psi$ as before, except now replace $2_k^m$ by $2_{\alpha_k}^m$ and replace $\max[\Psi, \widehat{\Phi}_e]^e$ by $\max[\Psi, \widehat{\Phi}_e]_{\alpha_e}$.  $\Psi$ is again honest because its runtime is elementary in its outputs.  In order to honestly compute the $2_{\alpha_k}^m$'s, use the fact that if $f$ is honest, then the predicate ``$f_{\alpha_k}(x) = y$'' is elementary, which is proven in the course of the proof of~\cite[Lemma~17]{KristiansenSchlage-PuchtaWeiermann}.  Again, $\Psi \geaE{\alpha} 2^x$ because $k$ must increase infinitely often.  That $k$ increases infinitely often implies that $\Psi$ is not dominated by $2_{\alpha_k}^x$ for any $k$, which, as $\lim_k \alpha_k = \alpha$, implies that $\Psi$ is not dominated by $2_\beta^x$ for any $\beta < \alpha$.  Finally, for any honest $h$, either $\max[\Psi, h] \ngeqaE{\alpha} \Gamma$ or $h \geqaE{\alpha} \Psi$ by again considering the indices $e$ for which $\widehat{\Phi}_e \equivaE{\alpha} h$ and whether or not they are all eventually removed from $C$.

\section{Degrees of relative provability without the cupping property}

Throughout this section, we assume that all theories considered are consistent theories in the language of arithmetic.  In~\cite{CaiHigherUnprovability}, Cai proves that if $T$ is a recursively axiomatizable extension of $\pa$, then there are non-zero elements of $\Pd{T^+}$ that do not have the cupping property.  We modify Cai's proof in a few ways that we hope will be helpful in future work comparing the degrees of relative provability to the honest $\alpha$-elementary degrees.

First, we prove a stronger statement:  for every non-zero $\dg b \in \Pd{T^+}$, there is a non-zero $\dg a \leP{T^+} \dg b$ that does not cup to $\dg b$.  Second, we work with $\Pd{T^+}$ directly.  Cai considers many different subalgebras of $T$'s Lindenbaum algebra, and he proves that, in the subalgebra of true $\Pi_1$ sentences, there are non-zero elements that do not have the cupping property.  He then obtains the corresponding result for $\Pd{T^+}$ by relativization and an application of an isomorphism theorem.  We use Cai's same strategy to directly define a total Turing machine $\Psi$ whose degree does not non-trivially cup above a given non-zero degree $\dg b$.  This construction is slightly more complicated than Cai's original construction, but we believe it has some benefits in addition to being technically interesting in its own right.  The direct construction is easier to see in terms of Kristiansen's isomorphism from Theorem~\ref{thm-iso}, and we hope that it will help decide whether or not $\dg b \ggaE{\alpha} \dg 0$ can be replaced by $\dg b \geaE{\alpha} \dg 0$ in the $\Had{\alpha}$ cases.  The direct construction also makes it a little easier to keep track of how much of $T$ is being used.  In our proof, we only assume that $T$ extends $\iso$, whereas Cai assumes that $T$ extends $\pa$ (though for Cai this assumption is mostly a matter of convenience).

We say that a number $t$ \emph{witnesses} a $\Sigma_1$ sentence $\exists n \varphi(n)$ if $(\exists n < t)\varphi(n)$.  That is, $t$ witnesses $\exists n \varphi(n)$ if $t$ is large enough verify that $\exists n \varphi(n)$ is true.  Let $\lc \varphi \rc$ denote the code of the formula $\varphi$ according to some fixed G\"odel numbering.  We first define a helpful family of auxiliary Turing machines.  For a $\Pi_1$ sentence $\eta$ and a finite set $C$ of pairs of the form $\la \lc \pi \rc, e \ra$, where $\pi$ is a $\Pi_1$ sentence and $e$ is the index of a Turing machine, let $A^{\lc \eta \rc}_C$ be the Turing machine that behaves as follows on input $s$.

\begin{itemize}
\item Initialize $t \coloneqq s$.

\item While $\btrue$:
\begin{itemize}
\item If $t$ witnesses $\neg\eta$:  halt and output $0$.
\item If there is a $\la \lc \pi \rc, e \ra \in C$ such that $(\forall n \leq s)(\Phi_{e,t}(n)\da)$ and $t$ does not witness $\neg\pi$:  halt and output $0$.
\item Else:  set $t \coloneqq t+1$.
\end{itemize}
\end{itemize}
\noindent
(The notation `$\Phi_{e,t}(n)\da$' means that the execution of $\Phi_e(n)$ halts within $t$ steps.)

Many of the following arguments combine reasoning in ordinary mathematics with reasoning inside of a formal theory.  We warn the reader that, to keep notational clutter to a minimum, we intentionally conflate the number $s \in \omega$ with the standard term that names it.  For example, if we have determined that the Turing machine $\Phi$ halts on all inputs $n \leq s$ and then want to reason formally about this, we write `$(\forall n \leq s)(\Phi(n)\da)$' instead of the more technically correct `$(\forall n \leq \overline s)(\Phi(n)\da)$,' where $\overline s$ is the name for $s$.

\begin{Lemma}\label{lem-helperA}
Let $T$ be a recursively axiomatizable extension of $\iso$.
\begin{itemize}
\item[(1)]
\begin{align*}
T^+ \vdash (\forall C)(\forall \la \lc \pi \rc, e \ra \in C)[(\true(\lc \pi \rc) \andd \tot(\Phi_e)) \imp \tot(A^{\lc \eta \rc}_C)].
\end{align*}

\item[(2)] Let $\eta$ be a true $\Pi_1$ sentence, and let $C$ be a finite set of pairs.  Then
\begin{align*}
T^+ \vdash \tot(A^{\lc \eta \rc}_C) \biimp \bigvee_{\substack{\la \lc \pi \rc, e \ra \in C\\ \textup{$\pi$ is true}}}\tot(\Phi_e)
\end{align*}
\textup{(}where the empty disjunction is considered to be false\textup{)}.
\end{itemize}
\end{Lemma}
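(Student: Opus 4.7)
The plan is to formalize inside $T^+$ the obvious meta-level analysis of the while-loop of $A^{\lc \eta \rc}_C$, using two features of $T^+$: it is $\Sigma_1$-complete (as it extends $\iso$) and it contains every true $\Pi_1$ sentence as an axiom. For part (1), I would work entirely inside $T^+$: fix $C$ and $\la \lc \pi \rc, e \ra \in C$, and assume $\true(\lc \pi \rc) \andd \tot(\Phi_e)$. Given an arbitrary input $s$, $\tot(\Phi_e)$ yields a $t_0$ large enough that $\Phi_{e,t_0}(n)\da$ for every $n \leq s$. Since $\true(\lc \pi \rc)$ is by definition the $\Pi_1$ statement that no number witnesses $\neg \pi$, the stage $t \coloneqq \max(s, t_0)$ satisfies both clauses of the second \emph{if}-check inside the loop, so the loop terminates and $A^{\lc \eta \rc}_C(s)\da$. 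This argument formalizes routinely in $\iso$.

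For the $(\Leftarrow)$ direction of (2), I would observe that for each concrete pair $\la \lc \pi \rc, e \ra \in C$ with $\pi$ true, the sentence $\true(\lc \pi \rc)$ is a true $\Pi_1$ sentence and hence an axiom of $T^+$; instantiating (1) at this specific pair yields $T^+ \vdash \tot(\Phi_e) \imp \tot(A^{\lc \eta \rc}_C)$, and combining these implications over the finitely many such pairs gives the desired implication. For the $(\Rightarrow)$ direction, I argue by contradiction inside $T^+$. Enumerate $C$ meta-theoretically as pairs $\la \lc \pi_i \rc, e_i \ra$ with $\pi_i$ true and $\la \lc \rho_j \rc, d_j \ra$ with $\rho_j$ false. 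Each $\neg \rho_j$ is then a true $\Sigma_1$ sentence, so $\Sigma_1$-completeness supplies specific numerals $t_j$ for which $T^+$ proves ``$t_j$ witnesses $\neg \rho_j$,'' and $T^+ \vdash \eta$ since $\eta$ is a true $\Pi_1$ sentence. Working in $T^+$, assume $\tot(A^{\lc \eta \rc}_C)$ and, for contradiction, that $\neg \tot(\Phi_{e_i})$ for every $i$; pick $n_i$ with $\Phi_{e_i}(n_i)\ua$, and set $s^* \coloneqq \max(\max_i n_i, \max_j t_j)$. Then $A^{\lc \eta \rc}_C(s^*)\da$ forces some stage $t \geq s^*$ of the loop to satisfy one of its halting clauses. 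However, the $\neg \eta$-clause cannot fire because $\eta$ holds; the clause for any $\la \lc \rho_j \rc, d_j \ra$ cannot fire because $t \geq s^* \geq t_j$ already witnesses $\neg \rho_j$; and the clause for any $\la \lc \pi_i \rc, e_i \ra$ cannot fire because $n_i \leq s^*$ and $\Phi_{e_i}(n_i)\ua$ together imply $\neg \Phi_{e_i, t}(n_i)\da$. This exhausts the cases and produces the contradiction.

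The main obstacle will be keeping the meta-level data and the formal $T^+$-reasoning properly separated: the numerals $t_j$ have to be produced externally via $\Sigma_1$-completeness and plugged in as concrete names, while the hypothetical witnesses $n_i$ live inside the $T^+$-proof and are extracted from the contradiction hypothesis $\neg \tot(\Phi_{e_i})$. Once these witnesses are in place, the rest of the argument is a routine case analysis on which clause of the while-loop could fire at a stage $t \geq s^*$, and the analysis goes through in $\iso$.
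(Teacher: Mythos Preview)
Your proposal is correct and follows essentially the same route as the paper: both argue (1) inside $T^+$ by producing a single stage at which the second halting clause fires (the paper explicitly invokes $\bso$ where you say ``formalizes routinely in $\iso$''), derive the $(\Leftarrow)$ direction of (2) by instantiating (1) at the true $\pi$'s, and prove the $(\Rightarrow)$ direction by assuming each $\neg\tot(\Phi_{e_i})$, choosing an input $s$ above the divergence witnesses and above witnesses for the false $\neg\rho_j$'s, and checking that no halting clause can fire. The only cosmetic difference is that the paper phrases $(\Rightarrow)$ as a direct contrapositive while you frame it as a contradiction, and you are more explicit about which data is fixed externally via $\Sigma_1$-completeness versus chosen inside $T^+$.
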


\begin{proof}
First we prove (1).  Working in $T^+$, suppose that $\la \lc \pi \rc, e \ra \in C$, $\true(\lc \pi \rc)$, and $\tot(\Phi_e)$.  Consider an arbitrary $s$.  By $\tot(\Phi_e)$ and $\bso$, there is a $t \geq s$ such that $(\forall n \leq s)(\Phi_{e,t}(n)\da)$.  Such a $t$ does not witness $\neg\pi$ by the assumption $\true(\lc \pi \rc)$.  Therefore there is a least $t \geq s$ such that either $t$ witnesses $\neg\eta$ or there is a $\la \lc \pi' \rc, e' \ra \in C$ such that $(\forall n \leq s)(\Phi_{e',t}(n)\da)$ and $t$ does not witness $\neg\pi'$.  Therefore $A^{\lc \eta \rc}_C(s)\da$.  As $s$ is arbitrary, we conclude $\tot(A^{\lc \eta \rc}_C)$.

The preceding argument also proves the `$\leftarrow$' direction of (2) because if $\pi$ is true, then $\true(\lc \pi \rc)$ is a true $\Pi_1$ sentence and hence an axiom of $T^+$.  So we need to prove the `$\imp$' direction of (2).  We prove the contrapositive.  Work in $T^+$ and suppose that
\begin{align*}
\bigwedge_{\substack{\la \lc \pi \rc, e \ra \in C\\ \textup{$\pi$ is true}}}\neg\tot(\Phi_e).
\end{align*}
For the conjunct indexed by $\la \lc \pi \rc, e \ra$, let $s_e$ be such that $\forall t(\Phi_{e,t}(s_e)\ua)$.  Let $s$ be larger than all of the $s_e$'s and large enough to witness $\neg\pi$ for all $\la \lc \pi \rc, e \ra \in C$ with $\pi$ false.  Note that no $t$ can witness $\neg\eta$ because $\eta$ is assumed to be true, and thus $\true(\lc \eta \rc)$ is an axiom of $T^+$.  Thus $A^{\lc \eta \rc}_C(s)\da$ leads to a contradiction, so we must have that $A^{\lc \eta \rc}_C(s)\ua$.  Therefore $\neg\tot(A^{\lc \eta \rc}_C)$.
\end{proof}

The next theorem is our modification of Cai's~\cite[Theorem~7.1]{CaiHigherUnprovability} and an analog of Theorem~\ref{thm-NoCupHelpH}.

\begin{Theorem}\label{thm-NoCupHelpT}
Let $T$ be a recursively axiomatizable extension of $\iso$.  For every $\dg b \in \Pd{T^+}$ with $\dg b \geP{T^+} \dg 0$, there is an $\dg a \in \Pd{T^+}$ such that $\dg a \meet \dg b \geP{T^+} \dg 0$ and $(\forall \dg c \in \Pd{T^+})[(\dg a \join \dg c \geqP{T^+} \dg b) \imp (\dg c \geqP{T^+} \dg b)]$.
\end{Theorem}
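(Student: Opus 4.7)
The plan is to imitate the strategy of Theorem~\ref{thm-NoCupHelpH}, replacing growth-rate reasoning with proof-theoretic reasoning mediated by the auxiliary machines $A^{\lc \eta \rc}_C$ of Lemma~\ref{lem-helperA}. Fix a Turing machine $\Gamma$ computing a representative of $\dg b$; since $\dg b > \dg 0$, we have $T^+ \nvdash \tot(\Gamma)$. I would build, via the Recursion Theorem, a total Turing machine $\Psi$ that maintains during its computation a growing set $C$ of pairs $\la \lc \pi \rc, e \ra$ (with $\pi$ a $\Pi_1$ sentence and $e$ a Turing-machine index) and whose stagewise action mimics the auxiliary machines $A^{\lc \eta \rc}_C$ for a fixed true $\Pi_1$ sentence $\eta$ (say $0 = 0$). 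The target invariant, formalizing Lemma~\ref{lem-helperA}(2) in an appropriate $\Pi_2$ guise, is
\[
T^+ \vdash \tot(\Psi) \biimp \bigvee_{\substack{\la \lc \pi \rc, e \ra \in C \\ \pi \text{ true}}} \tot(\Phi_e),
\]
where the right-hand side is understood as the $\Pi_2$ sentence ``there exists $\la \lc \pi \rc, e \ra$ enumerated into $C$ with $\pi$ true and $\Phi_e$ total.''

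At stage $s$, $\Psi$ dovetails a search for $T^+$-proofs of length at most $s$ of implications of the form $(\tot(\Psi) \andd \tot(\Phi_e)) \imp \tot(\Gamma)$ for $e \leq s$. When such a proof is found for some $e$, the construction first checks whether a $T^+$-proof of $\tot(\Phi_e) \imp \tot(\Gamma)$ has already appeared within $s$ steps; if so, no action is taken. Otherwise $\Psi$ adjoins a new pair $\la \lc \pi \rc, e \ra$ to $C$, where $\pi$ is a true $\Pi_1$ sentence encoding the finite fact ``at stage $s$ the construction performed the action for $e$.'' Choosing $\pi$ this way makes its truth depend only on a finite initial segment of the construction, keeps $\pi$ syntactically $\Pi_1$, and lets each update to $C$ preserve the desired equivalence.

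The verification has three parts. Totality of $\Psi$ follows from Lemma~\ref{lem-helperA}(1) once $C$ contains a pair with true $\pi$ and total $\Phi_e$, together with an elementary fall-back for the base case. For $\dg a \meet \dg b > \dg 0$, i.e., $T^+ \nvdash \tot(\Psi) \orr \tot(\Gamma)$: a $T^+$-proof of $\tot(\Psi) \orr \tot(\Gamma)$ would, via the equivalence above, yield a $T^+$-proof of a disjunction $\bigvee_e \tot(\Phi_e) \orr \tot(\Gamma)$ whose disjuncts have been individually verified not to $T^+$-imply $\tot(\Gamma)$, contradicting a standard consistency argument together with $T^+ \nvdash \tot(\Gamma)$. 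For the cupping clause, suppose $T^+ \vdash (\tot(\Psi) \andd \tot(\Phi)) \imp \tot(\Gamma)$ and write $\Phi = \Phi_e$; the search sees this proof at some stage $s$. Either the direct-conversion check has already succeeded, giving $T^+ \vdash \tot(\Phi) \imp \tot(\Gamma)$; or the pair $\la \lc \pi \rc, e \ra$ enters $C$, in which case $T^+ \vdash \tot(\Phi_e) \imp \tot(\Psi)$ (by Lemma~\ref{lem-helperA}(1) applied with the true $\pi$ we just enumerated), and composing with the hypothesis gives $T^+ \vdash \tot(\Phi_e) \imp \tot(\Gamma)$, i.e., $\dg c \geqP{T^+} \dg b$.

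The hard part will be the delicate self-referential setup and the formalization of the displayed equivalence when $C$ grows adaptively: one must replace the literal finite disjunction of Lemma~\ref{lem-helperA}(2) by a single $\Pi_2$-sentence of the form ``there exists an enumerated pair with true $\pi$ and total $\Phi_e$'' and prove the corresponding equivalence inside $T^+$. The $\pi$'s tagging the enumerated pairs must be genuinely true, syntactically $\Pi_1$, and encode enough of the finitary behavior of $\Psi$ for the equivalence to remain $T^+$-provable as $C$ grows. Handling the circularity -- $\Psi$ searches for proofs about its own totality while those proofs refer back to $\Psi$ -- is exactly where the Recursion Theorem enters, and where most of the technical care will lie.
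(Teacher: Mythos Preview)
Your proposal has a genuine gap in the totality argument, and the gap reflects a key mechanism that is missing from your construction. You let $\Psi$ always mimic $A^{\lc\eta\rc}_C$ with a fixed true $\eta$, so that your target equivalence forces $\Psi$ to be total only if some $\Phi_e$ with $\la\lc\pi\rc,e\ra$ eventually in $C$ is total. But nothing in your construction guarantees this: an index $e$ enters $C$ only when a proof of $(\tot(\Psi)\andd\tot(\Phi_e))\imp\tot(\Gamma)$ is found, and there is no reason any such $e$ should index a total machine. Your unspecified ``elementary fall-back for the base case'' cannot repair this without destroying the target equivalence. A second, related issue: your sketch of $\dg a\meet\dg b\geP{T^+}\dg 0$ is backwards. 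The disjuncts $\tot(\Phi_e)$ for $e\in C$ \emph{do} $T^+$-imply $\tot(\Gamma)$ once combined with $\tot(\Psi)$ (that is exactly why they were put into $C$); the correct argument is that this yields $T^+\vdash\tot(\Psi)\imp\tot(\Gamma)$, which together with the hypothetical $T^+\vdash\tot(\Psi)\orr\tot(\Gamma)$ gives $T^+\vdash\tot(\Gamma)$, a contradiction. Finally, note that $T^+$ is not recursively axiomatizable, so you cannot literally ``search for $T^+$-proofs''; the $\pi$'s must record the finitely many $\Pi_1$ axioms used in a candidate proof over $T$, not merely tag a stage of your construction.

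The paper solves all of this with a two-mode design. In its default \emph{search} mode $\Psi$ merely increments a proof-counter $p$ and halts immediately; it switches to \emph{run} mode (executing $A^{\lc\eta\rc}_C$) only after the counter reaches a proof witnessing $T+\eta'\vdash\tot(\Psi)\orr\tot(\Gamma)$ for some $\Pi_1$ sentence $\eta'$, and it falls back to search mode as soon as $\neg\eta'$ is witnessed. Hence if $\Psi(s)\ua$, then $\Psi$ is in run mode with a true $\eta'$, so $T^+\vdash\tot(\Psi)\orr\tot(\Gamma)$; combined with the true $\Sigma_2$ fact $\neg\tot(\Psi)$ this gives $T^+\vdash\tot(\Gamma)$, contradicting $\dg b\geP{T^+}\dg 0$. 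The same trigger is what freezes $C$ to a fixed finite $C_0$ at the moment the least genuine such proof appears, so that the finite form of Lemma~\ref{lem-helperA}(2) applies directly and no infinite-disjunction formalization is needed. Your proposal is missing precisely this search-for-$\tot(\Psi)\orr\tot(\Gamma)$ trigger and the accompanying mode switch.
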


\begin{proof}
Let $\Gamma$ be a total Turing machine with $\degP{T^+}(\Gamma) = \dg b \geP{T^+} \dg 0$.  Define a Turing machine $\Psi$ that behaves as follows on input $s$.  The definition of $\Psi$ uses the recursion theorem (see~\cite[Theorem~II.3.1]{Soare}) to assume that $\Psi$ has access to its own code.
\begin{itemize}
\item Initialize $\run \coloneqq \bfalse$, $C \coloneqq \emptyset$, $p \coloneqq 0$, $\lc \eta \rc \coloneqq \lc 0 = 0 \rc$.
\item Main loop:  for each $m \leq s$, do the following:
	\begin{itemize}
		\item If $\run$ is $\btrue$ and $m$ does not witness $\neg\eta$:  			execute $A^{\lc \eta \rc}_C(m)$.
		\item Else:
			\begin{itemize}
				\item Set $\run \coloneqq \bfalse$.
				\item If $p$ codes a proof witnessing
				\begin{align*}
				T + \pi \vdash (\tot(\Phi_e) \andd \tot(\Psi)) \imp \tot(\Gamma)
				\end{align*}
				for some $\Pi_1$ sentence $\pi$ and some $e$:  set 					$C \coloneqq C \cup \{\la \lc \pi \rc, e \ra\}$.
				\item If $p$ codes a proof witnessing
				\begin{align*}
				T + \eta' \vdash \tot(\Psi) \orr \tot(\Gamma)
				\end{align*}
				for some $\Pi_1$ sentence $\eta'$:  set $\lc \eta \rc 					\coloneqq \lc \eta' \rc$, and set $\run \coloneqq \btrue$.
				\item Set $p \coloneqq p+1$.
			\end{itemize}
	\end{itemize}
\item Output $0$.
\end{itemize}

\begin{ClaimT}\label{claim-tot}
$\Psi$ is total.
\end{ClaimT}

\begin{proof}[Proof of claim]
Suppose for a contradiction that $\Psi(s)\ua$ for some $s$.  Observe that $\Psi(s)\ua$ is a true $\Pi_1$ sentence and hence an axiom of $T^+$.  Thus $T^+ \vdash \Psi(s)\ua$, so $T^+ \vdash \neg\tot(\Psi)$ (in fact, $T^+$ proves all true $\Sigma_2$ sentences by the same argument).  As $\Psi(s)\ua$, it must be that the execution of $\Psi(s)$ executes $A^{\lc \eta \rc}_C(m)$ for some $\eta$, $C$, and $m$ for which $A^{\lc \eta \rc}_C(m)\ua$.  For this to happen, it must be that $T + \eta \vdash \tot(\Psi) \orr \tot(\Gamma)$.  Furthermore, it is easy to see that $A^{\lc \eta \rc}_C$ is total if $\eta$ is false.  So it must be that $\eta$ is true, in which case $T^+ \vdash \tot(\Psi) \orr \tot(\Gamma)$.  From $T^+ \vdash \neg\tot(\Psi)$ and $T^+ \vdash \tot(\Psi) \orr \tot(\Gamma)$ we conclude that $T^+ \vdash \tot(\Gamma)$, which is a contradiction.
\end{proof}

\begin{ClaimT}\label{claim-NotZeroT}
$T^+ \nvdash \tot(\Psi) \orr \tot(\Gamma)$.
\end{ClaimT}

\begin{proof}[Proof of claim]
Suppose for a contradiction that $T^+ \vdash \tot(\Psi) \orr \tot(\Gamma)$, and let $p_0$ be the least number coding a proof witnessing that $T + \eta_0 \vdash \tot(\Psi) \orr \tot(\Gamma)$ for some true $\Pi_1$ sentence $\eta_0$.  Then if $p < p_0$ codes a proof witnessing that $T + \eta \vdash \tot(\Psi) \orr \tot(\Gamma)$ for some $\Pi_1$ sentence $\eta$, this $\eta$ must be false.  Therefore, if the main loop is iterated enough times, $p$ is eventually set to $p_0$, and $\lc \eta \rc$ is eventually set to $\lc \eta_0 \rc$.  Let $s_0$ be least such that $p$ is set to $p_0$ and $\lc \eta \rc$ is set to $\lc \eta_0 \rc$ during the execution of $\Psi(s_0)$, and let $C_0$ be the value of $C$ when $p$ is set to $p_0$.  That $s_0$ has its defining property is (equivalent to) a true $\Sigma_1$ sentence, so $T^+$ proves that $s_0$ is least such that $p$ is set to $p_0$ and $\lc \eta \rc$ is set to $\lc \eta_0 \rc$ during the execution of $\Psi(s_0)$.

Now we work in $T^+$ to show that $T^+ \vdash (\forall s > s_0)(\Psi(s)\da) \biimp (\forall s > s_0)(A^{\lc \eta_0 \rc}_{C_0}(s)\da)$.  First suppose that $(\forall s > s_0)(\Psi(s)\da)$.  Consider the execution of $\Psi(s)$ for an $s > s_0$.  We know that $p$ is set to $p_0$ and that $\lc \eta \rc$ is set to $\lc \eta_0 \rc$ during iteration $s_0$ of the main loop.  Also, no number witnesses $\neg\eta_0$ because $\eta_0$ is true and hence $T^+ \vdash \true(\lc \eta_0 \rc)$.  Therefore, the main loop enters the `if' case in all iterations past $s_0$.  In particular, the main loop executes $A^{\lc \eta_0 \rc}_{C_0}(s)$ in iteration $s$.  Thus $A^{\lc \eta_0 \rc}_{C_0}(s)\da$ because $\Psi(s)\da$.  Conversely, suppose that $(\forall s > s_0)(A^{\lc \eta_0 \rc}_{C_0}(s)\da)$.  By the preceding claim, $(\forall s \leq s_0)(\Psi(s)\da)$, which is (equivalent to) a true $\Sigma_1$ sentence.  Thus $T^+ \vdash (\forall s \leq s_0)(\Psi(s)\da)$.  We prove by $\Sigma_1$ induction on $s$ that $(\forall s \geq s_0)(\Psi(s)\da)$.  We already know that $\Psi(s_0)\da$, which gives the base case.  Now assume that $\Psi(s)\da$, and consider the execution of $\Psi(s+1)$.  The execution of $\Psi(s+1)$ reaches iteration $s+1$ of the main loop because $\Psi(s)\da$.  As argued above, the fact that $s+1 > s_0$ means that the main loop executes $A^{\lc \eta_0 \rc}_{C_0}(s+1)$ in iteration $s+1$.  By assumption $A^{\lc \eta_0 \rc}_{C_0}(s+1)\da$, so $\Psi(s+1)\da$.

Now, from
\begin{align*}
T^+ &\vdash (\forall s > s_0)(\Psi(s)\da) \biimp (\forall s > s_0)(A^{\lc \eta_0 \rc}_{C_0}(s)\da),\\
T^+ &\vdash (\forall s \leq s_0)(\Psi(s)\da), \text{ and}\\
T^+ &\vdash (\forall s > s_0)(A^{\lc \eta_0 \rc}_{C_0}(s)\da) \biimp \tot(A^{\lc \eta_0 \rc}_{C_0})
\end{align*}
(the last of which is easy to see), we conclude that $T^+ \vdash \tot(\Psi) \biimp \tot(A^{\lc \eta_0 \rc}_{C_0})$.  Therefore, by Lemma~\ref{lem-helperA}~item~(2),
\begin{align*}
T^+ \vdash \tot(\Psi) \biimp \bigvee_{\substack{\la \lc \pi \rc, e \ra \in C_0\\ \textup{$\pi$ is true}}}\tot(\Phi_e).
\end{align*}
If the disjunction is empty, then $T^+ \vdash \neg\tot(\Psi)$.  Combining this with the assumption $T^+ \vdash \tot(\Psi) \orr \tot(\Gamma)$ yields $T^+ \vdash \tot(\Gamma)$, which is a contradiction.  If the disjunction is not empty, then consider each $\la \lc \pi \rc, e \ra \in C_0$ where $\pi$ is true.  For $\la \lc \pi \rc, e \ra$ to have been added to $C_0$, it must be that $T + \pi \vdash (\tot(\Phi_e) \andd \tot(\Psi)) \imp \tot(\Gamma)$.  Therefore $T^+ \vdash (\tot(\Phi_e) \andd \tot(\Psi)) \imp \tot(\Gamma)$ because $\pi$ is true.  Thus
\begin{align*}
T^+ &\vdash \tot(\Psi) \imp \bigvee_{\substack{\la \lc \pi \rc, e \ra \in C_0\\ \textup{$\pi$ is true}}}(\tot(\Phi_e) \andd \tot(\Psi)), \text{ and}\\
T^+ &\vdash \left(\bigvee_{\substack{\la \lc \pi \rc, e \ra \in C_0\\ \textup{$\pi$ is true}}}(\tot(\Phi_e) \andd \tot(\Psi))\right) \imp \tot(\Gamma).
\end{align*}
It follows that $T^+ \vdash \tot(\Psi) \imp \tot(\Gamma)$.  Combining this with the assumption $T^+ \vdash \tot(\Psi) \orr \tot(\Gamma)$ yields $T^+ \vdash \tot(\Gamma)$, which is a contradiction.  Thus $T^+ \nvdash \tot(\Psi) \orr \tot(\Gamma)$, as desired.
\end{proof}

\begin{ClaimT}\label{claim-avoidT}
If $e$ is such that $T^+ \vdash (\tot(\Phi_e) \andd \tot(\Psi)) \imp \tot(\Gamma)$, then $T^+ \vdash \tot(\Phi_e) \imp \tot(\Psi)$.
\end{ClaimT}

\begin{proof}[Proof of claim]
We start by showing that $p$ increases infinitely often in the sense that for every $p_0$ there is an $m$ such that $p$ is set to $p_0$ in iteration $m$ of the main loop.  First, $\Psi$ is total by Claim~\ref{claim-tot}, so $\Psi$ never diverges during the execution of the main loop.  Second, $p$ increases exactly in iterations where the main loop enters the `else' case.  Thus if $p$ increases only finitely often, there must be an $m_0$ such that main loop only enters the `if' case in iterations past $m_0$.  For this to happen, there must be a true $\Pi_1$ sentence $\eta'$ such that $T + \eta' \vdash \tot(\Psi) \orr \tot(\Gamma)$.  Thus $T^+ \vdash \tot(\Psi) \orr \tot(\Gamma)$, which contradicts Claim~\ref{claim-NotZeroT}.

Now, suppose that $T^+ \vdash (\tot(\Phi_e) \andd \tot(\Psi)) \imp \tot(\Gamma)$, and let $p_0$ be a proof witnessing that $T + \pi \vdash (\tot(\Phi_e) + \tot(\Psi)) \imp \tot(\Gamma)$ for some true $\Pi_1$ sentence $\pi$.  Let $s_0$ be such that $p$ is increased from $p_0$ to $p_0+1$ during iteration $s_0$ of the main loop, so that $\la \lc \pi \rc, e \ra$ is added to $C$ during this iteration.  We now argue in $T^+$ that $\tot(\Phi_e) \imp \tot(\Psi)$.  As argued in Claim~\ref{claim-NotZeroT}, $(\forall s \leq s_0)(\Psi(s)\da)$ is (equivalent to) a true $\Sigma_1$ sentence, so $T^+ \vdash (\forall s \leq s_0)(\Psi(s)\da)$.  We prove by $\Sigma_1$ induction on $s$ that $(\forall s \geq s_0)(\Psi(s)\da)$.  We already know that $\Psi(s_0)\da$, which gives the base case.  Now assume that $\Psi(s)\da$, and consider the execution of $\Psi(s+1)$.  The execution of $\Psi(s+1)$ reaches iteration $s+1$ of the main loop because $\Psi(s)\da$.  If iteration $s+1$ enters the `else' case, then clearly $\Psi(s+1)\da$.  If iteration $s+1$ enters the `if' case, then it executes $A^{\lc \eta \rc}_C(s+1)$.  However, $\la \lc \pi \rc, e \ra \in C$ because it entered $C$ during iteration $s_0 < s+1$.  Thus from the true $\Pi_1$ sentence $\true(\lc \pi \rc)$, the assumption $\tot(\Phi_e)$, and Lemma~\ref{lem-helperA} item~(i), we conclude that $A^{\lc \eta \rc}_C(s+1)\da$.  Thus $\Psi(s+1)\da$.  This completes the induction.  Finally, we conclude $\tot(\Psi)$ from $(\forall s \leq s_0)(\Psi(s)\da)$ and $(\forall s \geq s_0)(\Psi(s)\da)$.  Thus $T^+ \vdash \tot(\Phi_e) \imp \tot(\Psi)$, as desired.
\end{proof}

Let $\dg a = \degP{T^+}(\tot(\Psi))$.  Then $\dg a \meet \dg b \geP{T^+} \dg 0$ by Claim~\ref{claim-NotZeroT}.  If $\dg c$ is such that $\dg a \join \dg c \geqP{T^+} \dg b$, then $\dg c \geqP{T^+} \dg a$ by Claim~\ref{claim-avoidT}, so $\dg c \geqP{T^+} \dg b$.
\end{proof}

\begin{Corollary}\label{cor-NoCuppingT}
Let $T$ be a recursively axiomatizable extension of $\iso$.  For every $\dg b \in \Pd{T^+}$ with $\dg b \geP{T^+} \dg 0$, there is an $\dg a \in \Pd{T^+}$ with $\dg 0 \leP{T^+} \dg a \leP{T^+} \dg b$ such that $(\forall \dg c \in \Pd{T^+})[(\dg a \join \dg c = \dg b) \imp (\dg c = \dg b)]$.  That is, for every $\dg b \geP{T^+} \dg 0$, there is a non-zero $\dg a \leP{T^+} \dg b$ that does not cup to $\dg b$.
\end{Corollary}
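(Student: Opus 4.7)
The plan is to mirror the proof of Corollary~\ref{cor-NoCuppingH} almost verbatim, using the distributive lattice structure of $\Pd{T^+}$ noted in Section~4 together with Theorem~\ref{thm-NoCupHelpT}. Everything substantive is already packaged in Theorem~\ref{thm-NoCupHelpT}; what remains is a soft lattice-theoretic consequence.

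Given $\dg b \in \Pd{T^+}$ with $\dg b \geP{T^+} \dg 0$, I would first invoke Theorem~\ref{thm-NoCupHelpT} to obtain $\dg x \in \Pd{T^+}$ such that $\dg x \meet \dg b \geP{T^+} \dg 0$ and $(\forall \dg c \in \Pd{T^+})[\dg x \join \dg c \geqP{T^+} \dg b \imp \dg c \geqP{T^+} \dg b]$. The candidate witness is then $\dg a \coloneqq \dg x \meet \dg b$. The bound $\dg a \leqP{T^+} \dg b$ is immediate from the definition of meet, and $\dg a \geP{T^+} \dg 0$ is exactly the first clause of the conclusion of Theorem~\ref{thm-NoCupHelpT}. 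Strictness $\dg a \leP{T^+} \dg b$ will fall out of the cupping clause proved below, since otherwise taking $\dg c = \dg 0$ would force $\dg 0 \geqP{T^+} \dg b$, contradicting $\dg b \geP{T^+} \dg 0$.

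For the cupping property, I would suppose that $\dg c \in \Pd{T^+}$ satisfies $\dg a \join \dg c = \dg b$. Then $\dg c \leqP{T^+} \dg b$, and distributivity of $\Pd{T^+}$ gives
\begin{align*}
\dg b = \dg a \join \dg c = (\dg x \meet \dg b) \join \dg c = (\dg x \join \dg c) \meet (\dg b \join \dg c) = (\dg x \join \dg c) \meet \dg b,
\end{align*}
so $\dg x \join \dg c \geqP{T^+} \dg b$. The defining property of $\dg x$ then gives $\dg c \geqP{T^+} \dg b$, and combined with $\dg c \leqP{T^+} \dg b$ this forces $\dg c = \dg b$, as required. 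There is no real obstacle here: the entire content of the corollary is absorbed into Theorem~\ref{thm-NoCupHelpT}, and the remaining argument is identical in shape to the proof of Corollary~\ref{cor-NoCuppingH}.
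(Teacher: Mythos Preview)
Your proposal is correct and follows the paper's proof essentially verbatim: invoke Theorem~\ref{thm-NoCupHelpT} to get $\dg x$, set $\dg a = \dg x \meet \dg b$, and use distributivity exactly as you wrote. Your explicit remark that $\dg a \leP{T^+} \dg b$ follows by taking $\dg c = \dg 0$ is a small clarification the paper leaves implicit, but otherwise the arguments coincide.
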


\begin{proof}
Given $\dg b \geP{T^+} \dg 0$, by Theorem~\ref{thm-NoCupHelpT}, let $\dg x$ be such that $\dg x \meet \dg b \geP{T^+} \dg 0$ and $(\forall \dg c)[(\dg x \join \dg c \geqP{T^+} \dg b) \imp (\dg c \geqP{T^+} \dg b)]$.  Let $\dg a = \dg x \meet \dg b$.  Then $\dg 0 \leP{T^+} \dg a \leP{T^+} \dg b$.  Consider a $\dg c \in \Pd{T^+}$ such that $\dg a \join \dg c = \dg b$.  Clearly $\dg c \leqP{T^+} \dg b$.  On the other hand, using the fact that $\Pd{T^+}$ is a distributive lattice,
\begin{align*}
\dg b = \dg a \join \dg c = (\dg x \meet \dg b) \join \dg c = (\dg x \join \dg c) \meet (\dg b \join \dg c) = (\dg x \join \dg c) \meet \dg b.
\end{align*}
Thus $\dg x \join \dg c \geqP{T^+} \dg b$, which implies that $\dg c \geqP{T^+} \dg b$ by the choice of $\dg x$.  Thus $\dg c = \dg b$.
\end{proof}

Theorem~\ref{thm-NoCupHelpT} and Corollary~\ref{cor-NoCuppingT} also hold with $T$ in place of $T^+$.  In this situation, the definition of $\Psi$ can be simplified because there is no longer any need for the $\pi$'s and $\eta$'s.  One must be careful to check that a similar verification can be done using only $T$.

\section*{Acknowledgments}
We thank Mingzhong Cai, Lars Kristiansen, Robert Lubarsky, Jan-Christoph Schlage-Puchta, and Andreas Weiermann for their helpful comments on the drafts of this work.

\bibliographystyle{amsplain}
\bibliography{cupping}

\vfill

\end{document}